\documentclass[12pt]{article}

\usepackage[utf8]{inputenc}
\usepackage[a4paper,nomarginpar]{geometry}
\usepackage[english]{babel}
\usepackage{enumitem}
\usepackage{amsmath,amsfonts,amssymb,amsthm,amscd}
\usepackage{tikz,xcolor}
\usepackage{mathrsfs}
\allowdisplaybreaks

\theoremstyle{plain}
\newtheorem{theorem}{Theorem}
\newtheorem{proposition}[theorem]{Proposition}
\newtheorem{lemma}[theorem]{Lemma}
\newtheorem{corollary}[theorem]{Corollary}

\theoremstyle{definition}
\newtheorem{definition}[theorem]{Definition}
\newtheorem{remark}[theorem]{Remark}
\newtheorem{example}[theorem]{Example}

\newcommand{\N}{\mathbb{N}} 
\newcommand{\Z}{\mathbb{Z}} 
\newcommand{\K}{\mathbb{K}} 

\newcommand{\card}{\operatorname{card}}

\newcommand{\ldens}{\operatorname{\underline{dens}}}
\newcommand{\udens}{\operatorname{\overline{dens}}}

\title{Distributional and mean Li--Yorke chaos for weighted shifts on Fréchet sequence spaces}

\author{  Jo\~ao V. A. Pinto}

\date{}

\begin{document}
	
	\maketitle
	
	\begin{abstract}
		In this paper, we give characterizations of distributional chaos and mean Li–Yorke chaos for weighted backward shifts acting on general Fréchet sequence spaces. As an application, we derive criteria for these two types of chaos in the setting of Köthe sequence spaces $\lambda_p(A,J)$ for $p \in \{0\}\cup [1, \infty)$ and $J=\N$ or $J=\Z$.
	\end{abstract}
	\bigskip\noindent
	{\bf Keywords:} distributional chaos, Fréchet sequence spaces, Köthe sequence spaces, mean Li–Yorke chaos.
	
	\bigskip\noindent
	{\bf 2020 Mathematics Subject Classification:} Primary 47A16, 47B33; Secondary 46E15, 46E30.
	\section{Introduction}
	Linear dynamics is a branch of mathematics at the intersection of dynamical systems and operator theory. It aims to study dynamical properties of continuous linear operators acting on topological vector spaces. For studies focused on properties related to hypercyclicity (the existence of a dense orbit), one of the most investigated notions in linear dynamics, such as mixing, weakly mixing, and Devaney’s chaos, we refer the reader to \cite{BaMa,GE-P}.
	
	Over the last decade, other notions of chaos—namely, distributional chaos, Li–Yorke chaos, and mean Li–Yorke chaos—focusing on the dynamics of pairs of points, have been extensively studied in the context of linear dynamics. In \cite{Nilson0}, the authors developed a general theory of distributional chaos in the linear setting of Fréchet spaces. Among other results, they established the Distributional Chaos Criterion, which will be employed in the present work. In \cite{Nilson1}, this criterion was used to obtain a complete characterization, in the form of an equivalence, for weighted backward shifts acting on the spaces $\ell^p(X)$ ($p \in [1, \infty)$) and $c_0(X)$, where $X=\N$ or $X=\Z$; these results appear as corollaries of more general theorems proved in \cite{Nilson1} for weighted composition operators. For studies of this notion of chaos in the setting of Banach spaces, we refer the reader to \cite{BBMP11, BBPW}. 
	
	In \cite{MOP}, sufficient conditions are given under which backward shifts on Köthe sequence spaces $\lambda_p(A,\mathbb{N})$ ($p \in [1, \infty)\cup \{0\}$) are distributionally chaotic. In the first part of this work, our aim is to provide a complete characterization of distributional chaos for weighted backward shifts in a setting more general than Köthe sequence spaces, namely, Fréchet sequence spaces. These spaces are subspaces of $\mathbb{K}^{\mathbb{N}}$ endowed with a topology that turns them into Fréchet spaces and ensures the continuity of the canonical projections. Our approach to obtaining such a characterization is based on the method used in \cite{Nilson1}, based on the Distributional Chaos Criterion. 
	
	In \cite{BBMP2}, the property of Li–Yorke chaos was studied in the linear context for operators acting on Fréchet spaces. Building on the framework established in \cite{BBMP2}, the authors in \cite{Nilson2} provide a full characterization of Li–Yorke chaos for weighted composition operators on the spaces $L^p(\mu)$ ($p \in [1, \infty)$) and $C_0(\Omega)$, as well as for weighted backward shifts on arbitrary Fréchet sequence spaces. 
	
	In order to mention other related works, we refer to \cite{Tlu, XWu}. In \cite{Tlu}, the authors study distributional chaos for weighted backward shifts on spaces of the form $\Sigma(X):=X^{\mathbb{N}}$, where $X$ is a Banach space. In \cite{XWu}, the authors provide a characterization of Li--Yorke chaos for (unweighted) shifts on Köthe sequence spaces $\lambda_p(A,\N)$, with $p \in [1,\infty] \cup \{0\}$.
	
	An important variant of Li--Yorke chaos is mean Li--Yorke chaos. In \cite{BBP,BBPW}, this notion was studied in the context of Banach spaces, and in \cite{Chi} it was generalized to complete metrizable topological groups, in particular to Fr\'echet spaces. In \cite{Nilson1}, a characterization of this property was obtained for weighted composition operators on the spaces $L^p(\mu)$ ($p \in [1, \infty)$) and $C_0(\Omega)$, and, as corollaries, for  weighted backward shifts on the spaces $\ell^p(X)$ ($p \in [1, \infty)$) and $c_0(X)$, where $X=\mathbb{N}$ or $X=\mathbb{Z}$. In the second part of the present work, we use the results developed in \cite{Chi} to provide a complete characterization of mean Li--Yorke chaos for weighted backward shifts on Fr\'echet sequence spaces satisfying the following natural condition:
	\begin{itemize}
		\item[\rm (C)] For each $n \in \mathbb{N}$, $m \in \mathbb{Z}$, and $x=(x_j)_{j \in \mathbb{Z}}$, we have
		\[
		|x_m|\,\|e_m\|_n \le \|x\|_n.
		\]
	\end{itemize}
	It is straightforward to verify that Köthe sequence spaces satisfy condition (C). 
	
	The paper is organized as follows. In Section 2, we recall some definitions of Fr\'echet sequence spaces and fix the notation. In Section 3, we establish a characterization of distributional chaos for weighted backward shifts in the more general setting of Fr\'echet sequence spaces; as a consequence, we obtain corollaries characterizing this property for Köthe sequence spaces. Finally, in Section 4, we characterize mean Li--Yorke chaos for weighted backward shifts on Fr\'echet sequence spaces satisfying condition (C).

	\section{Preliminaries}
	Throughout, $\mathbb{K}$ denotes either the field $\mathbb{R}$ of real numbers or the field $\mathbb{C}$ of complex numbers, $\mathbb{Z}$ denotes the ring of integers, $\mathbb{N}$ denotes the set of all positive integers, and $\mathbb{N}_0 = \mathbb{N} \cup \{0\}$. A vector space $X$ is said to be a {\it Fréchet space} if it is endowed with an increasing sequence $(\left\| \cdot \right\|_k)_{k \in \N}$ of seminorms (called a {\it fundamental sequence of seminorms}) that defines a metric 
	\begin{equation}\label{d}
		d(x,y) := \sum_{k=1}^{\infty} \frac{1}{2^k}\,\min\{1,\|x-y\|_k\}, \quad \text{for }x,y \in X,
	\end{equation}
	under which $X$ is complete.
	\begin{definition}
		A Fréchet space $X$ which is a vector subspace of the product space $\mathbb{K}^{\mathbb{N}}$ is a {\it Fréchet sequence space} if the inclusion map $X \to \mathbb{K}^{\mathbb{N}}$ is continuous, i.e., convergence in $X$ implies coordinatewise convergence.
	\end{definition}
	If $w := (w_n)_{n\in\mathbb{N}}$ is a sequence of nonzero scalars, the closed graph theorem implies that the \emph{unilateral weighted backward shift}
	\[
	B_w(x_1, x_2, x_3, \ldots) := (w_1 x_2,\, w_2 x_3,\, w_3 x_4, \ldots)
	\]
	
	is a continuous linear operator on $X$ provided that it maps $X$ into itself. If $w := (1)_{n\in\mathbb{N}}$, then we denote $B_w=B$.
	\begin{definition}
		Let $X$ be a Fréchet sequence space. The canonical vectors $e_n := (\delta_{n,j})_{j\in\mathbb{N}} \in \mathbb{K}^{\mathbb{N}}$ ($n\in\mathbb{N}$) form a {\it basis} of $X$ if they belong to $X$ and
		\[
		x = \sum_{n=1}^\infty x_n e_n, \qquad \text{for all } x := (x_n)_{n\in\mathbb{N}} \in X.
		\]
	\end{definition}
	In the case where $X$ is a Fréchet sequence space with basis $(e_n)_{n\in\mathbb{N}}$, we define the set $c_{00}(\mathbb{N})$ to be the subspace of all sequences with only finitely many nonzero coordinates.
	
	Similarly, we can define the above concepts in the bilateral case: 
	\begin{definition}
		A Fréchet space $X$ which is a vector subspace of the product space $\mathbb{K}^{\mathbb{Z}}$ is a {\it Fréchet sequence space over} $\Z$ if the inclusion map $X \to \mathbb{K}^{\mathbb{Z}}$ is continuous, i.e., convergence in $X$ implies coordinatewise convergence.
	\end{definition}
	As previously, if $w := (w_n)_{n\in\mathbb{Z}}$ is a sequence of nonzero scalars, then the \emph{bilateral weighted backward shift}
	\[
	B_w((x_n)_{n\in\mathbb{Z}}) := (w_n x_{n+1})_{n\in\mathbb{Z}}
	\]
	
	is a continuous linear operator on $X$ provided that it maps $X$ into itself. If $w := (1)_{n\in\mathbb{Z}}$, then we denote $B_w=B$.
	\begin{definition}
		Let $X$ be a Fréchet sequence space over $\Z$. The canonical vectors $e_n := (\delta_{n,j})_{j\in\mathbb{Z}} \in \mathbb{K}^{\mathbb{Z}}$ ($n\in\mathbb{Z}$) form a {\it basis} of $X$ if they belong to $X$, and
		\[
		x = \sum_{n=-\infty}^{\infty} x_n e_n \qquad \text{for all } x := (x_n)_{n\in\mathbb{Z}} \in X.
		\]
	\end{definition}
	In the case where $X$ is a Fréchet sequence space over $\Z$ with basis $(e_n)_{n\in\mathbb{Z}}$, we define the set $c_{00}(\mathbb{Z})$ to be the subspace of all sequences with only finitely many nonzero coordinates.
	
	To introduce the main class of examples of Fréchet sequence spaces, we need the following definition:
	\begin{definition}
		Let $J=\N$ or $J=\Z$. A matrix $A = (a_{j,k})_{j\in J,\; k\in\mathbb{N}}$ is called a {\it  K\"othe matrix} if it satisfies:
		\begin{itemize}
			\item[(i)] \(a_{j,k} \ge 0\) for all \(j\in J\) and \(k\in\mathbb{N}\);
			\item[(ii)] for each fixed \(j\in J\), $ a_{j,k} \le a_{j,k+1}$, for all $k \in \N$;
			\item[(iii)] for each \(j\in J\) there exists at least one \(k\in \N\) such that \(a_{j,k} > 0\).
		\end{itemize}
	\end{definition}
	\begin{definition}\label{def1}
		Let $J=\N$ or $J=\Z$. Consider $p\in \{0\}\cup [1,\infty)$ and  a Köthe matrix $A = (a_{j,k})_{j\in J,\; k\in\mathbb{N}}$. The {\it associated Köthe sequence space} (or simply {\it Köthe sequence space}) $\lambda_p(A,J)$ is the Fréchet sequence space defined as follows:
		\begin{itemize}
			\item If $p\in[1,\infty)$, then
			\[
			\lambda_p(A,J) := \Big\{ (x_j)_{j\in J}\in \K^J : \sum_{j\in J} |a_{j,k} x_j|^p < \infty \text{ for all } k\in\mathbb{N} \Big\},
			\]
			endowed with the seminorms
			\[
			\|x\|_k := \left( \sum_{j\in J} |a_{j,k} x_j|^p \right)^{1/p}, 
			\quad\text{ for }\; x=(x_j)_{j\in J}\in \lambda_p(A,J)\;\text{ and } k\in\mathbb{N}.
			\]
			
			\item If $p=0$, then
			\[
			\lambda_0(A,J) := \Big\{ (x_j)_{j\in J}\in \K^J : \lim_{j\in J,\,|j|\to\infty} a_{j,k} x_j = 0 \text{ for all } k\in\mathbb{N} \Big\},
			\]
			endowed with the seminorms
			\[
			\|x\|_k := \sup_{j\in J} |a_{j,k} x_j|,
			\quad \text{ for }\; x=(x_j)_{j\in J}\in \lambda_0(A,J)\;\text{ and } k\in\mathbb{N}.
			\]
		\end{itemize}
	\end{definition}
	It is straightforward to verify that the sequence $(e_n)_{n \in J}$ of canonical vectors in $\K^J$ is a basis for $\lambda_p(A,J)$ where $1 \le p < \infty$ or $p=0$ and $J=\N$ or $J=\Z$. It is well known that the weighted backward shift on $\lambda_p(A,J)$ is continuous if, and only if, for all $k \in \N$ there exists $m \in \N$ such that $a_{j,k}=0$ whenever $a_{j+1,m}=0$ $(j \in J)$ and
	$$\sup_{j \in J} \frac{a_{j,k}|w_j|}{a_{j+1,m}} < \infty.$$
	For a detailed discussion of Köthe sequence spaces, see reference \cite{Ko}.
	\begin{example}
		Let $J=\N$ or $J=\Z$ and let $\nu:=(v_n)_{n \in J}\subset (0,\infty)$. Consider the Köthe matrix
		$A=(a_{j,k})_{j\in J,\,k\in\mathbb{N}}$ defined by $a_{j,k}=v_j$ for all  $k\in\mathbb{N}$.
		\begin{itemize}
			\item[\rm (a)] If $p=0$, then the Köthe space $\lambda_0(A,J)$ coincides with the classical Banach space
			\[
			c_0(\nu,J)=\{(x_j)_{j\in J}\in\mathbb{K}^J : v_jx_j\to 0 \text{ as } |j|\to\infty\},\quad \text{with norm }\left\| (x_n)_{n\in J} \right\|=\sup_{n\in J}\left| v_nx_n \right|.
			\] 
			When $\nu = (1)_{n \in J}$ we denote $c_0(\nu,J)$ by $c_0(J)$.
			\item[\rm (b)] If $1\le p<\infty$, then $\lambda_p(A,J)$ coincides with the classical Banach space
			\[
			\ell^p(\nu,J)=\{(x_j)_{j\in J}\in\mathbb{K}^J : \sum_{j\in J}|v_jx_j|^p<\infty\}\quad \text{with norm }\left\| (x_n)_{n\in J} \right\|=\left( \sum_{j\in J}|v_jx_j|^p \right)^{\frac{1}{p}}.
			\] 
			When $\nu = (1)_{n \in J}$ we denote $\ell^p(\nu,J)$ by $\ell^p(J)$.
		\end{itemize}
	\end{example}
	\begin{example}
		Let $J = \mathbb{N}$ or $J=\mathbb{Z}$. If 
		$a_{j,k} := (|j|+1)^k$ for all $j \in J$ and $k \in \mathbb{N}$, we denote by
		\[
		s(J):=\lambda_1(A,J),
		\]
		the space of {\it rapidly decreasing sequences on} $J$, which is a classical example of a non-normable Fréchet sequence space.
	\end{example}
	\section{Distributional chaos}
	The term 'chaos' was first introduced into the mathematical literature by Li and Yorke \cite{16} in their investigation of the dynamics of interval maps. Later, Schweizer and Smítal \cite{23} introduced the following notion, which can be seen as a natural extension of the original Li--Yorke concept:
	\begin{definition}
		Given a metric space $M$, a map $f : M \to M$ is said to be {\em distributionally chaotic} if there exist
		an uncountable set $\Gamma \subset M$ and $\varepsilon > 0$ such that each pair $(x,y)$ of distinct points in $\Gamma$
		is a {\em distributionally chaotic pair for $f$}, in the sense that
		\[
		\liminf_{k\to \infty}\frac{\text{card}\left( \{n \in \left\{ 1, \cdots, k \right\} : d(f^n(x),f^n(y)) < \varepsilon\} \right)}{k}=0
		\]
		and
		\[
		\limsup_{k\to \infty}\frac{\text{card}\left( \{n \in \left\{ 1, \cdots, k \right\} : d(f^n(x),f^n(y)) < \tau \} \right)}{k}=1, \ \text{ for all } \tau > 0,
		\]
		where $\text{card}(A)$ denotes the cardinality of the set $A\subset \N.$
	\end{definition}
	We also use the following notation: let $A\subset \N$, then we define
	$$\udens(A):=\limsup_{N \to \infty}\frac{\text{card}(\left\{ 1, \cdots, N \right\}\cap A)}{N}\quad \text{and} \quad \ldens(A):=\liminf_{N \to \infty}\frac{\text{card}(\left\{ 1, \cdots, N \right\}\cap A)}{N}.$$
	To prove our main result in this section, we need the following definition and theorem from \cite{Nilson0}:
	\begin{definition}Let $T$ be a continuous linear operator on a Fréchet space $X$. We say that $T$ satisfies the \emph{Distributional Chaos Criterion} (DCC) if there exist sequences $(x_k), (y_k)$ in $X$ such that:
		\begin{itemize}
			\item[\rm (a)] There exists $A \subset \mathbb{N}$ with $\udens(A) = 1$ such that $\lim_{n \in A} T^n x_k = 0$ for all $k \in \N$.
			\item[\rm (b)] $y_k \in \overline{\operatorname{span}\{x_n : n \in \mathbb{N}\}}$, $\lim_{k \to \infty} y_k = 0$ and there exist $\varepsilon > 0$ and an increasing sequence $(N_k)_{k\in \N}$ in $\mathbb{N}$ such that
			\[
			\operatorname{card}\{1 \le j \le N_k : d(T^j y_k, 0) > \varepsilon\} \ge N_k(1 - k^{-1})
			\]
			for all $k \in \mathbb{N}$.
		\end{itemize}
	\end{definition}
	\begin{theorem}\cite[Theorem 12]{Nilson0} \label{T0}
		Let $T$ be a continuous linear operator on a Fréchet space $Y$. Then, the following statements are equivalent:
		\begin{itemize}
			\item[\rm (a)] $T$ is distributionally chaotic;
			\item[\rm (b)] $T$ admits a {\em distributionally irregular vector}, that is, 
			a vector $y \in Y$ for which there are $m \in \mathbb{N}$ and $A, B \subset \mathbb{N}$ with $\udens(A) = \udens(B) = 1$ such that
			\[
			\lim_{n \in A} T^n y = 0
			\quad \text{and} \quad
			\lim_{n \in B} \|T^n y\|_m = \infty.
			\]
			\item[\rm (c)]  $T$ satisfies the DCC.
		\end{itemize}
	\end{theorem}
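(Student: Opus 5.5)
The plan is to prove the three implications (a)$\Rightarrow$(b), (b)$\Rightarrow$(c) and (c)$\Rightarrow$(a) in the setting of an operator $T$ on a Fréchet space $Y$ with metric $d$ as in (\ref{d}), using linearity to reduce statements about pairs to statements about single vectors via $d(T^nx,T^ny)=d(T^n(x-y),0)$.

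For (b)$\Rightarrow$(c), take $x_k:=y$ for all $k$ and $A$ as given, so condition (a) of the DCC holds. For condition (b), put $y_k:=y/\lambda_k$ with $\lambda_k\to\infty$ chosen so that $y_k\to0$. Since $\|T^ny\|_m\to\infty$ along $B$ with $\udens(B)=1$, I choose $N_k$ increasing with $\operatorname{card}(B\cap[1,N_k])\ge N_k(1-(2k)^{-1})$, and such that $\|T^ny\|_m>\lambda_k$ for $n\in B$, $n\ge N_k/(2k)$. This is possible by first fixing $\lambda_k$ and then taking $N_k$ large, so that the finitely many small indices of $B$ below the threshold cost at most a $(2k)^{-1}$ fraction. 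Then $\|T^ny_k\|_m>1$, hence $d(T^ny_k,0)\ge 2^{-m}=:\varepsilon$ (taking $\min\{1,\cdot\}$ and the fact that the seminorms are increasing), on at least $N_k(1-k^{-1})$ indices. Also $y_k\in\operatorname{span}\{y\}$.

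For (a)$\Rightarrow$(b), take a distributionally chaotic pair $x\neq y$ and set $z=x-y$. The pair conditions give a set $A$ of upper density $1$ with $T^nz\to0$ along $A$; this is obtained by a diagonal argument over $\tau=1/j$, concatenating blocks $[1,N_j]$ where the density of $\{d<1/j\}$ is near $1$. They also give a set $B'$ of upper density $1$ with $d(T^nz,0)\ge\varepsilon$, hence $\|T^nz\|_m\ge\delta$ on $B'$ for some fixed $m$ and $\delta>0$. To upgrade boundedness below to divergence, I would use a Baire/linearity argument as in \cite{Nilson0}. The set of vectors whose orbit has $\|T^n\cdot\|_m$ unbounded along a set of upper density one is residual in $\overline{\operatorname{span}}$ of the orbit of $z$, once one such "$\delta$-irregular" direction exists. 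This is done by summing rescaled copies $\sum c_j T^{?}z$ and exploiting that $T^nz\to0$ on $A$, so the terms do not interfere. This step is the main obstacle: building a vector with genuinely unbounded growth from one with growth bounded below requires a careful series construction $u=\sum_k y_k$ with rapidly chosen scales.

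(c)$\Rightarrow$(a) is the hardest direction, and I would structure it in two stages. First, from the DCC build a distributionally irregular vector $u=\sum_k c_k y_k$ in $\overline{\operatorname{span}}\{x_n\}$. Choose the $c_k$ inductively so that $\sum c_k y_k$ converges, since $y_k\to0$ and we may pass to a subsequence. Choose them also so that each new term dominates the earlier ones on the window $[1,N_k]$, making $\|T^nu\|_m$ large on most of $[1,N_k]$, while the orbit along $A$ still tends to $0$ thanks to the smallness of the tails. Second, show that the set of distributionally irregular vectors contains $\{\lambda u:\lambda\in\mathbb{K}\setminus\{0\}\}$, which is uncountable. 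For distinct $\lambda,\mu$ the difference $(\lambda-\mu)u$ is irregular, so $\Gamma=\{\lambda u\}$ is a distributionally chaotic scrambled set. This last step closes the cycle of implications.
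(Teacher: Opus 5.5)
Your (b)$\Rightarrow$(c) is correct. So is the closing observation that $\{\lambda u:\lambda\in\K\}$ is distributionally scrambled once $u$ is distributionally irregular: every $(\lambda-\mu)u$ is irregular with the same $m$ and $B$, so $\varepsilon=2^{-m}$ works uniformly.

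The gap is the one nontrivial ingredient, namely actually producing a distributionally irregular vector. In (a)$\Rightarrow$(b) you defer it to an unspecified series ``$\sum c_jT^{?}z$''. The construction sketched in (c)$\Rightarrow$(a) does not work as stated, for three reasons.
(i) The $y_k$ lie only in $\overline{\operatorname{span}}\{x_n\}$, so nothing is known about $T^ny_k$ along $A$. Each $y_k$ must first be replaced by a vector of $\operatorname{span}\{x_n\}$ approximating it under $T,\dots,T^{N_k}$.
(ii) Smallness of the tail $\sum_{i>k}c_iy_i$ in $X$ gives no control of $T^n$ of the tail for large $n\in A$, since $(T^n)_n$ need not be equicontinuous (indeed $T^ny_k$ is large for most $n\le N_k$). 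One must first fix a finite block $A\cap[a_k,b_k]$, with $a_k/b_k\to 0$ and $\card(A\cap[1,b_k])/b_k\to 1$, on which the current partial sum is already small. Only then can the later terms be chosen, small under the finitely many $T^n$ with $n\le b_k$.
(iii) Most importantly, ``each new term dominates the earlier ones on $[1,N_k]$'' cannot be guaranteed. Let $h$ be the previous partial sum and $\varepsilon'$ the lower bound for $\|T^ny_k\|_m$. Domination needs $|c_k|\varepsilon'$ to beat $\|T^nh\|_m$ for most $n\le N_k$, while convergence needs $c_ky_k$ small. Nothing in the DCC ties the rate at which $y_k\to 0$ to the growth of $\|T^nh\|_m$ on $[1,N_k]$. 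Passing to a later $y_k$ to gain smallness also enlarges $N_k$, and with it that growth.

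The missing idea is to replace domination by a pigeonhole over multiples. Choose $m$ with $2^{-m}<\varepsilon/2$, so that $d(v,0)>\varepsilon$ implies $\|v\|_m>\varepsilon/2$. Suppose $|c|\varepsilon>4R$. Then for any $h$ and any $j$ with $\|T^jy_k\|_m>\varepsilon/2$, at most one of $h+cy_k,\,h+2cy_k,\dots,h+Lcy_k$ has $\|T^j(\cdot)\|_m\le R$, since two such would give $|c|\,\|T^jy_k\|_m\le 2R$. Hence one of them has $\|T^j(\cdot)\|_m>R$ for at least $N_k(1-k^{-1}-L^{-1})$ indices $j\le N_k$, whatever $h$ is. For fixed $c$ and $L$, all these vectors are close to $h$ once $k$ is large.

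This shows that $\{v\in Y:\udens\{n:\|T^nv\|_m>R\}=1 \text{ for all } R\}$ is a dense $G_\delta$ in $Y:=\overline{\operatorname{span}}\{x_n\}$. The set $\{v\in Y:\udens\{n:d(T^nv,0)<\tau\}=1 \text{ for all } \tau>0\}$ is a $G_\delta$ containing $\operatorname{span}\{x_n\}$, hence also dense. Baire gives a common element, and the usual diagonal argument turns it into a distributionally irregular vector.

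This also makes your (a)$\Rightarrow$(b) superfluous, because (a)$\Rightarrow$(c) is elementary. With $z=x-y$, put $x_k:=T^kz$; DCC(a) holds with the same $A$, because $T^nT^kz=T^k(T^nz)$ and $T^k$ is continuous. Put $y_k:=T^{M_k}z$ with $M_k\in A$ and $d(T^{M_k}z,0)<1/k$. Take $N_k\ge 2M_k$ such that $\{n:d(T^nz,0)\ge\varepsilon\}$ occupies at least a $(1-\frac{1}{2k})$-fraction of $[1,N_k+M_k]$; then DCC(b) holds with $\varepsilon/2$. The cycle (a)$\Rightarrow$(c)$\Rightarrow$(b)$\Rightarrow$(a) then needs only the one hard step above. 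The paper itself gives no proof of this theorem; it is quoted from \cite{Nilson0}.
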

	In the following result, we establish a characterization of distributional chaos for bilateral weighted backward shifts on Fréchet sequence spaces over $\Z$.
	\begin{theorem}\label{T1}
		Let $X$ be a Fréchet sequence space over $\mathbb{Z}$, endowed with an increasing sequence $(\left\| \cdot \right\|_n)_{n \in \N}$ of seminorms, in which the sequence 
		$(e_n)_{n \in \mathbb{Z}}$ of canonical vectors is a basis. Suppose that the bilateral weighted backward shift $B_w$, with nonzero weights $w := (w_n)_{n \in \mathbb{Z}}$, is well-defined and continuous on $X$. Then, $B_w$ is distributionally chaotic if and only if there exist $D \subset \N$ with $\udens(D)=1$ and $I \subset \Z$ such that the following conditions hold:
		\begin{itemize}
			\item[\rm (A)] For all $i \in I$, we have
			$$\lim_{n \in D} w_{i-n}\cdots w_{i-1}e_{i-n}=0.$$
			\item[\rm (B)] There exist $m \in \N$ and an increasing sequence $(N_k)_{k \in \N}$ of positive integers such that for each $k \in \N$ there are $r:=r(k) \in \N$, indices $i_{1,k}, \cdots, i_{r,k} \in I$ and scalars $b_{1,k}, \cdots, b_{r,k} \in \K \setminus\left\{ 0 \right\}$ with $\left\| \sum_{j=1}^{r} b_{j,k} e_{i_{j,k}} \right\|_{p(k)} \ne 0$ and
			$$\card\left\{ 1 \le n \le N_k :\frac{\left\| \sum_{j=1}^{r} b_{j,k} w_{i_{j,k}-n}\cdots w_{i_{j,k} -1}e_{i_{j,k}-n} \right\|_{m} }{\left\| \sum_{j=1}^{r} b_{j,k} e_{i_{j,k}} \right\|_{p(k)}}> k \right\}> (1- k^{-1}) N_k,$$
			where $p(k):= m$, if $1\le k\le m$, and $p(k):= k$, if $k>m$.
		\end{itemize}
		
	\end{theorem}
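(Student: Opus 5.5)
We prove both directions through Theorem~\ref{T0}, using the equivalence between distributional chaos and the Distributional Chaos Criterion. The basic computation is that $B_w^n e_i = w_{i-n}\cdots w_{i-1}e_{i-n}$ for $n\in\N$ and $i\in\Z$. Hence, for finite linear combinations, $B_w^n\big(\sum_j b_j e_{i_j}\big)=\sum_j b_j w_{i_j-n}\cdots w_{i_j-1}e_{i_j-n}$. This matches the expressions in (A) and (B).

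\emph{Sufficiency.} Assume $D$, $I$, $m$, $(N_k)$ and the vectors of (B) are given. We verify the DCC.
\begin{itemize}
\item Let $\{x_k\}$ be an enumeration of $\{e_i : i\in I\}$, and take $A:=D$. Condition (A) says exactly that $\lim_{n\in D}B_w^n e_i=0$ for all $i\in I$, which is part (a) of the DCC.
\item For part (b), set $u_k:=\sum_{j=1}^{r(k)} b_{j,k}e_{i_{j,k}}\in \operatorname{span}\{x_n\}$ and $y_k:=u_k/(k\,\|u_k\|_{p(k)})$. Since $p(k)=k$ for $k>m$ and the seminorms are increasing, for each fixed $q$ and all $k\ge \max\{q,m\}$ we get $\|y_k\|_q\le \|y_k\|_{k}=1/k\to 0$. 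Hence $y_k\to 0$.
\item By (B), for more than $(1-k^{-1})N_k$ indices $n\le N_k$ we have $\|B_w^n y_k\|_m>1$. Since $d(z,0)\ge 2^{-m}\min\{1,\|z\|_m\}$, we obtain $d(B_w^n y_k,0)\ge 2^{-m}$ for these indices. So the DCC holds with $\varepsilon=2^{-m-1}$.
\end{itemize}
Theorem~\ref{T0} then gives that $B_w$ is distributionally chaotic.

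\emph{Necessity.} Suppose $B_w$ is distributionally chaotic. By Theorem~\ref{T0}, there is a distributionally irregular vector $y$, with $m_0$ and sets $A,B$ of upper density $1$ such that $B_w^n y\to0$ along $A$ and $\|B_w^n y\|_{m_0}\to\infty$ along $B$.
\begin{itemize}
\item \emph{Choice of $D$ and $I$.} Take $D:=A$ and $I:=\{i\in\Z: y_i\neq 0\}$. Coordinatewise continuity gives, for each $i\in I$, that the $(i-n)$-th coordinate of $B_w^n y$ is $w_{i-n}\cdots w_{i-1}y_i$. The difficulty is that (A) asks for convergence in $X$ of $w_{i-n}\cdots w_{i-1}e_{i-n}$, not merely coordinatewise convergence, and in a general Fréchet sequence space the size of a single coordinate need not be controlled by the seminorms (there is no condition (C) here).
\item \emph{Handling (A).} The natural remedy is to work with the DCC directly, since Theorem~\ref{T0} also gives the DCC vectors $x_k$ with $\lim_{n\in A}B_w^n x_k=0$. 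One then has to extract single basis vectors from them. I expect this to be the main obstacle. The approach I would try is to take $I$ to be the set of indices $i$ for which (A) holds along $D:=A$. The vectors $y_k$ are limits of finite combinations of the $x_n$. I would try to show, via the basis expansion and an equicontinuity or diagonal argument, that they can be approximated by finite combinations of $e_i$ with $i\in I$ while keeping the orbit estimates. If this approximation fails, a fallback is to pass to $\overline{\operatorname{span}}\{e_i:i\in I\}$ and use that $B_w$ maps basis vectors to multiples of basis vectors.
\item \emph{Obtaining (B).} Let $m$ be the index from the DCC, chosen via the metric: pick $m$ with $2^{-m}<\varepsilon/2$, so that $d(z,0)>\varepsilon$ forces $\|z\|_m>\varepsilon/2$. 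Given $k$, choose a DCC vector $y_{k'}$ with $k'\ge k$ large enough that $\|y_{k'}\|_{p(k)}$ is tiny, which is possible since $y_{k'}\to0$. Truncate it to a finite combination $u=\sum_j b_j e_{i_j}$ with $i_j\in I$. The truncation must be so close to $y_{k'}$ that, by continuity of $B_w^n$ for the finitely many $n\le N_{k'}$, the estimate $\|B_w^n u\|_m>\varepsilon/4$ persists on the same index set. Then the ratio $\|B_w^n u\|_m/\|u\|_{p(k)}$ exceeds $k$ on more than $(1-k^{-1})N_{k'}$ indices. If $\|u\|_{p(k)}=0$, perturb $u$ slightly by adding some $e_i$ with $\|e_i\|_{p(k)}\neq0$. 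Relabeling $N_{k'}$ as $N_k$, and passing to a subsequence to keep it increasing, completes (B).
\end{itemize}
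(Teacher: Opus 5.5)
Your sufficiency direction is correct and is the paper's argument: DCC with $x_k$ running through $\{e_i : i\in I\}$ and $y_k=u_k/(k\|u_k\|_{p(k)})$.

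The necessity direction has a genuine gap exactly where you flag it. You never prove (A). Your construction for (B) also assumes that the truncations $u=\sum_j b_je_{i_j}$ have all indices in $I$, which is the same unproved fact. The \emph{approximation via an equicontinuity or diagonal argument} and the \emph{fallback} are plans, not arguments. Without uniform control of single coordinates, nothing forces the support of a DCC vector, or of $y$, to consist of indices satisfying (A). You are also conflating the set $A$ from the irregular vector with the set $A$ from the DCC, which need not coincide.

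The missing idea is that the basis hypothesis gives exactly this control, even without (C). Since $(e_n)$ is a basis of the Fréchet space $X$, the partial-sum operators are continuous, because coordinates are continuous. They converge pointwise to the identity, so by Banach--Steinhaus they are equicontinuous. Reading the bilateral series as convergence of both one-sided series, each $P_k(z)=z_ke_k$ is a difference of consecutive one-sided partial-sum operators. Hence $\{P_k\}_{k\in\Z}$ is equicontinuous: for every $q$ there are $q'$ and $C>0$ with $|z_k|\,\|e_k\|_q\le C\|z\|_{q'}$ for all $k\in\Z$ and $z\in X$. This is a weak, index-shifting form of (C).

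Now take $D:=A$ and $I:=\{i : y_i\neq 0\}$ for your irregular vector $y$. Given a $0$-neighbourhood $V$, pick $U$ with $P_k(U)\subset V$ for all $k$. For large $n\in A$ we have $B_w^ny\in U$, so $y_i\,w_{i-n}\cdots w_{i-1}e_{i-n}=P_{i-n}(B_w^ny)\in V$ for every $i$. Dividing by the fixed $y_i\neq0$ gives (A).

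Then build (B) from $y$ itself rather than from DCC vectors:
\begin{enumerate}
\item Enlarge $m_0$ to some $m$ with $\|y\|_m\neq 0$. Still $\|B_w^ny\|_m\to\infty$ along your set $B$.
\item The set of $n$ with $\|B_w^ny\|_m>k(\|y\|_{p(k)}+1)$ has upper density one. So you can choose increasing $N_k$ such that more than $(1-\tfrac{1}{2k})N_k$ indices $n\le N_k$ lie in it.
\item Truncate to $\sum_{|i|\le N}y_ie_i$ with $N$ large enough that three things hold: these finitely many strict inequalities survive; $\|\sum_{|i|\le N}y_ie_i\|_{p(k)}<\|y\|_{p(k)}+1$; and this seminorm is nonzero, since it tends to $\|y\|_{p(k)}\ge\|y\|_m>0$.
\item Discard the indices with $y_i=0$. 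This leaves nonzero coefficients on indices in $I$ and removes the need for your ad hoc perturbation when $\|u\|_{p(k)}=0$.
\end{enumerate}
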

	
	\begin{proof}
		$(\Rightarrow)$ Since $B_w$ is distributionally chaotic, by Theorem \ref{T0}, $B_w$ admits a distributionally irregular vector $x:= (x_n)_{n \in \Z}$, that is, there exist sets $D,E \subset \N$ with $\overline{\text{dens}}(D)=\overline{\text{dens}}(E)=1$ and $m \in \N$ such that 
		\begin{equation}\label{eq1}
			\lim_{n \in D} (B_w)^{n}(x)=0 \quad \text{and} \quad \lim_{n \in E} \left\|(B_w)^{n}(x)  \right\|_m = \infty.
		\end{equation}
		It is easy to see that we can take $m\in \N$ in (\ref{eq1}) sufficiently large such that $\left\| x \right\|_m \neq 0$. Take $I:=\left\{ i \in \Z: x_i \ne 0 \right\}$. Let $V$ be a neighborhood of $0$ in $X$. Then, by the equicontinuity of the family of maps $y:= (y_n)_{n \in \Z} \in X \mapsto y_k e_k \in X $ ($k \in \Z$), there exists a neighborhood $U$ of 0 in $X$ such that 
		\begin{equation}\label{eq2}
			y:= (y_n)_{n \in \Z} \in U \quad \Rightarrow  \quad y_ke_k \in V, \quad \text{for all } k \in \Z.
		\end{equation}
		By (\ref{eq1}) and (\ref{eq2}), there exists $n_0 \in \N$ such that  
		$$n \in D\,\,\, \text{and} \,\,\, n \ge n_0 \quad \Rightarrow \quad (B_w)^n(x) \in U \quad \Rightarrow \quad x_{i}w_{i -n}\cdots w_{i-1}e_{i-n} \in V, $$
		for all $i \in I$. Therefore, item (A) holds. On the other hand, for each $k \in \N$, since 
		$$\overline{\text{dens}}\left\{ n \in \N : \left\| (B_w)^n(x)\right\|_m > k(\left\| x \right\|_{p(k)} +1)\right\}= \overline{\text{dens}}(E)=1,$$
		there exists $N_k \in \N$, such that 
		$$\text{card}\left\{ 1\le n \le N_k : \left\| (B_w)^n(x)\right\|_m > k(\left\| x \right\|_{p(k)} +1)\right\} > N_k\left( 1 -\frac{1}{2k}\right). $$  
		The positive integers $N_k$ can be chosen so that the sequence $(N_k)_{k \in \N}$ is increasing. Fix $k$ and define 
		$$J_k := \left\{ 1\le n \le N_k : \left\| (B_w)^n(x)\right\|_m > k(\left\| x \right\|_{p(k)} +1)\right\}.$$
		Since, for each $n \in J_k$
		$$k(\left\| x \right\|_{p(k)} +1) <\left\| \left( B_w \right)^n(x) \right\|_m = \lim_{N \to \infty} \left\|\sum_{i=-N}^{N} x_i w_{i-n}\cdots w_{i-1} e_{i-n} \right\|_m,$$
		then, there exists $N \in \N$ large enough such that the following inequalities hold
		$$\left\|\sum_{i=-N}^{N} x_i w_{i-n}\cdots w_{i-1} e_{i-n} \right\|_m >k (\left\| x \right\|_{p(k)} +1) > k\left\| \sum_{i=-N}^{N} x_i e_i \right\|_{p(k)}, \quad \text{for all }n \in J_k.$$
		Therefore, item (B) holds. \\ 
		$(\Leftarrow)$ For this implication, we will use the Distributional Chaos Criterion. By item (A), for each $i \in I$ we have that 
		$$\lim_{n \in D}(B_w)^n(e_i)=\lim_{n \in D} w_{i-n}\cdots w_{i-1}e_{i-n}=0.$$
		Therefore, the item (a) of DCC holds. Now, for each $k \in \N$ consider
		$$ y_k : =\frac{\sum_{j=1}^{r} b_{j,k} e_{i_{j,k}}}{k \left\| \sum_{j=1}^{r} b_{j,k} e_{i_{j,k}} \right\|_{p(k)}},$$
		where $r=r(k) \in \N$, $i_{1,k}, \cdots , i_{r,k} \in I$ and $b_{1,k}, \cdots, b_{r,k} \in \K \setminus\left\{ 0 \right\}$ come from (B). Fix $n \in \N$ and $\varepsilon>0$. Take $k_0 \in \N$ such that $k_0 > \max \{ m,n\}$ and $\frac{1}{k_0} < \varepsilon.$ Then, for all $k \ge k_0$, we have 
		$$\left\| y_k \right\|_n = \left\|\frac{\sum_{j=1}^{r} b_{j,k} e_{i_{j,k}}}{k\left\| \sum_{j=1}^{r} b_{j,k} e_{i_{j,k}} \right\|_k} \right\|_n \le \frac{1}{k} < \varepsilon.$$
		Therefore, $y_k \to 0$, when $k \to \infty$. For $m \in \N$ of item (B), there is $\delta > 0$ such that 
		\begin{equation}\label{eq3}
			\left\| x \right\|_m > 1 \quad \Rightarrow d(x, 0)> \delta.
		\end{equation}
		Then, by the definition of $y_k$, item (B) and (\ref{eq3}), we have 
		$$\text{card} \left\{ 1 \le n \le N_k :d((B_w)^n(y_k),0)> \delta \right\}> (1- k^{-1}) N_k.$$
		Therefore, the distributional chaos criterion holds.
	\end{proof}
	\begin{remark}
		The condition (A) of Theorem \ref{T1} was used in the $(\Leftarrow)$ part of the proof to ensure that
		\begin{equation}\label{eq8}
			\lim_{n\in D}(B_w)^n(e_i)=0, \quad \text{for all }i\in I,
		\end{equation}
		where $\udens(D)=1$, and thereby allow us to use the DCC. In the unilateral case, (\ref{eq8}) is trivially satisfied with $D= \N$ and $e_i$ for all $i \in \N$. Therefore, in this context, we obtain the following unilateral characterization:
	\end{remark}
	\begin{theorem}\label{theo2}
		Let $X$ be a Fréchet sequence space, endowed with an increasing sequence $(\left\| \cdot \right\|_n)_{n \in \N}$ of seminorms, in which the sequence 
		$(e_n)_{n \in \mathbb{N}}$ of canonical vectors is a basis. Suppose that the unilateral weighted backward shift $B_w$, with nonzero weights $w := (w_n)_{n \in \mathbb{N}}$, is well-defined and continuous on $X$. Then, $B_w$ is distributionally chaotic if and only if the following condition holds:
		\begin{itemize}
			\item There exist $m \in \N$ and an increasing sequence $(N_k)_{k \in \N}$ of positive integers, such that for each $k \in \N$ there are $r:=r(k) \in \N$, indices $i_{1,k}, \cdots, i_{r,k} \in \N$ and scalars $b_{1,k}, \cdots, b_{r,k} \in \K \setminus\left\{ 0 \right\}$ with $\left\| \sum_{j=1}^{r} b_{j,k} e_{i_{j,k}} \right\|_{p(k)}\ne 0$ and
			$$\card \left\{ 1 \le n \le N_k :\frac{\left\| \sum_{j=1}^{r} b_{j,k} w_{i_{j,k}-n}\cdots w_{i_{j,k} -1}e_{i_{j,k}-n} \right\|_{m} }{\left\| \sum_{j=1}^{r} b_{j,k} e_{i_{j,k}} \right\|_{p(k)}}> k \right\} > (1- k^{-1}) N_k,$$
			where $p(k):= m$, if $1\le k\le m$, and $p(k):= k$, if $k>m$. We consider $e_{k}= (0)_{j \in \N}$ and $w_k=0$, for $k<1.$
		\end{itemize}
	\end{theorem}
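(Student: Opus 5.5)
The plan is to mimic the proof of Theorem \ref{T1} almost verbatim, with the unilateral conventions $e_k=0$ and $w_k=0$ for $k<1$. The one structural change is that condition (A) becomes automatic, so the characterization reduces to condition (B) alone with $I=\N$. The key observation is that for every $i\in\N$ and every $n\ge i$ we have $(B_w)^n e_i = w_{i-n}\cdots w_{i-1}e_{i-n}=0$. Hence $\lim_{n\in\N}(B_w)^n e_i=0$ trivially, which is item (a) of the DCC with $A=\N$.

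For $(\Rightarrow)$, I will use Theorem \ref{T0} to get a distributionally irregular vector $x=(x_n)_{n\in\N}$, with $m\in\N$ and $E\subset\N$ of upper density one such that $\|(B_w)^n x\|_m\to\infty$ along $E$. First I enlarge $m$ so that $\|x\|_m\neq 0$; this is possible since $x\neq 0$, the seminorms increase, and $X$ is Hausdorff. Next, for each $k$, I choose $N_k$, increasing in $k$, so that the set $J_k=\{1\le n\le N_k:\|(B_w)^n x\|_m>k(\|x\|_{p(k)}+1)\}$ has more than $(1-\tfrac1{2k})N_k$ elements. This uses $\udens(E)=1$ together with the fact that $E\setminus\{n:\|(B_w)^nx\|_m> k(\|x\|_{p(k)}+1)\}$ is finite. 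Since $(e_n)$ is a basis and $B_w$ is continuous, the partial sums $\sum_{i=1}^N x_ie_i$ converge to $x$, and their images $\sum_{i=1}^N x_i w_{i-n}\cdots w_{i-1}e_{i-n}$ converge to $(B_w)^n x$. Because $J_k$ is finite, one $N$ works for all $n\in J_k$ at once, giving
\[
\Big\|\sum_{i\le N} x_i w_{i-n}\cdots w_{i-1}e_{i-n}\Big\|_m > k(\|x\|_{p(k)}+1) > k\Big\|\sum_{i\le N}x_ie_i\Big\|_{p(k)} .
\]
I then take the indices $i\le N$ with $x_i\neq0$ and $b_{j,k}=x_{i_{j,k}}$. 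The nonvanishing of $\|\sum_j b_{j,k}e_{i_{j,k}}\|_{p(k)}$ follows for large $N$, since $\|x\|_{p(k)}\ge\|x\|_m>0$. If that denominator could be $0$, the displayed inequality forces it positive anyway after enlarging $N$.

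For $(\Leftarrow)$, I verify the DCC as in Theorem \ref{T1}. Set $y_k=\sum_j b_{j,k}e_{i_{j,k}}\big/\big(k\|\sum_j b_{j,k}e_{i_{j,k}}\|_{p(k)}\big)$ and $x_k=e_k$. Then $y_k$ lies in $\operatorname{span}\{e_n\}$. For fixed $n$ and $k>\max\{m,n\}$ we have $\|y_k\|_n\le\|y_k\|_k=1/k$, so $y_k\to0$. The hypothesis gives $\|(B_w)^n y_k\|_m>1$ for more than $(1-k^{-1})N_k$ indices $n\le N_k$. Choosing $\delta=2^{-m}$ from the metric \eqref{d}, we get $\|z\|_m>1\Rightarrow d(z,0)\ge 2^{-m}$, which yields item (b) with $\varepsilon=\delta/2$. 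Theorem \ref{T0} then concludes.

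The only mildly delicate point is the uniform choice of a single truncation $N$ for the finitely many $n\in J_k$, together with the nondegeneracy of the denominator. Both are handled as above by finiteness of $J_k$ and the basis property.
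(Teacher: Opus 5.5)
Your proposal is correct and follows the paper's route: the paper obtains Theorem~\ref{theo2} by rerunning the proof of Theorem~\ref{T1} and noting that condition (A) holds automatically in the unilateral case with $D=\N$, since $(B_w)^n e_i=0$ for $n\ge i$; this is exactly your choice $x_k=e_k$, $A=\N$ in the DCC. Your closing remark that the displayed inequality ``forces'' the denominator to be positive is not right, since that inequality holds trivially when the denominator is $0$, but it is not needed, because positivity already follows from $\|\sum_{i\le N}x_ie_i\|_{p(k)}\to\|x\|_{p(k)}\ge\|x\|_m>0$.
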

	As a first application, we prove that Theorem \ref{theo2} recovers \cite[Theorem 11]{MOP}, which establishes a sufficient condition in the case of $\ell^p(\nu, \mathbb{N})$ for $p\in [1, \infty)$ or $c_0(\nu, \N)$. To this end, we introduce the following notation:
	\[
	S_{i,j}(\alpha) = \{ k \in [i,j]\cap \N : a_k \ge \alpha \},
	\]
	for positive integers $i < j$ and a number $\alpha > 0$.
	\begin{corollary}
		Let $(\nu_n)_{n \in \N}\subset (0, \infty)$ and fix $p \in \{0\} \cup [1,\infty)$. Assume that the backward shift $B$ is well-defined and continuous on $\ell^p(\nu, \N)$ (or $c_0(\nu,\N)$, if $p=0$). If there exist a sequence $(\alpha_n)_{n \in \mathbb{N}} \subset (0,+\infty)$ and increasing functions
		$j_0, j_1 : \mathbb{N} \to \mathbb{N}$ such that $j_1(n) - j_0(n) \ge n$ for all $n \in \N$ and
		\begin{itemize}
			\item[\rm (i)] $\lim_{n \to \infty} \frac{a_{j_1(n)}}{\alpha_n} = 0,$
			\item[\rm (ii)] $\lim_{n \to \infty} 
			\frac{ \operatorname{card}\left(  S_{j_0(n),\, j_1(n)}(\alpha_{n}) \right)}
			{j_1(n) - j_0(n)} = 1,$ 
		\end{itemize}
		then $B$ is distributionally chaotic.
	\end{corollary}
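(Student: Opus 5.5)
The plan is to deduce the corollary directly from Theorem \ref{theo2}, taking a single canonical vector in condition (B) ($r(k)=1$, $b_{1,k}=1$). Throughout, $a_j$ denotes the weight $\nu_j$, so that $\|e_j\|=a_j$ both in $\ell^p(\nu,\N)$ and in $c_0(\nu,\N)$. Since the space is Banach, every seminorm equals the norm, so we may take $m=1$ and the value of $p(k)$ plays no role. All weights of $B$ equal $1$, hence $B^s e_j=e_{j-s}$ for $s<j$. Consequently the ratio appearing in Theorem \ref{theo2} for the vector $e_{j}$ is $a_{j-s}/a_{j}$.

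Fix $k\in\N$. Using (i), (ii) and $j_1(n)-j_0(n)\ge n$, choose $n=n_k$ large enough that three conditions hold:
\begin{itemize}
\item[(1)] $a_{j_1(n)}<\alpha_n/k$;
\item[(2)] $\card S_{j_0(n),j_1(n)}(\alpha_n)\ge (1-\tfrac{1}{2k})(j_1(n)-j_0(n))$;
\item[(3)] $j_1(n)-j_0(n)> 2k$.
\end{itemize}
Put $i_{1,k}:=j_1(n_k)$ and $N_k:=j_1(n_k)-j_0(n_k)$. Choosing the $n_k$ inductively, and using that $N_k\ge n_k$ can be made arbitrarily large, we can also ensure that $(N_k)$ is strictly increasing.

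For $1\le s\le N_k$, the index $j_1-s$ runs through $[j_0,j_1-1]$. Whenever $j_1-s\in S_{j_0,j_1}(\alpha_n)$ we have $a_{j_1-s}\ge\alpha_n>k\,a_{j_1}$ by (1), so the ratio exceeds $k$. Since at most the single index $j_1$ itself is missed, (2) gives
\[
\card\{1\le s\le N_k : a_{j_1-s}/a_{j_1}>k\}\ \ge\ \Bigl(1-\tfrac{1}{2k}\Bigr)N_k-1\ >\ \Bigl(1-\tfrac1k\Bigr)N_k ,
\]
where the last inequality uses (3), i.e. $N_k>2k$. Also $\|e_{j_1}\|=a_{j_1}\neq0$ because $\nu_{j_1}>0$. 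Thus the condition of Theorem \ref{theo2} holds, and $B$ is distributionally chaotic.

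The main obstacle is that Theorem \ref{theo2} counts $n$ from $1$ to $N_k$, while hypothesis (ii) only controls density inside the window $[j_0(n),j_1(n)]$. Taking $N_k=j_1$ would ruin the density whenever $j_0$ is comparable to $j_1$. This is resolved by starting from $e_{j_1(n)}$ and stopping after exactly $j_1-j_0$ shifts, so the orbit sweeps precisely that window. The remaining care is the off-by-one loss (the index $j_1$ itself), which is absorbed using $j_1-j_0\ge n$.
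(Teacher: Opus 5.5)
Your proposal is correct and follows essentially the same route as the paper: you apply Theorem \ref{theo2} with $r=1$, $i_{1,k}=j_1(n_k)$ and $N_k=j_1(n_k)-j_0(n_k)$, count the good shifts via $S_{j_0(n_k),j_1(n_k)}(\alpha_{n_k})$ minus one index, and use $j_1(n)-j_0(n)\ge n$ to absorb that index and to make $(N_k)$ increasing. As a minor aside, your condition (1) gives $a_{j_1(n_k)}<\alpha_{n_k}$, so $j_1(n_k)\notin S_{j_0(n_k),j_1(n_k)}(\alpha_{n_k})$; the off-by-one loss therefore never occurs, and (3) is needed only to make $(N_k)$ increasing.
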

	\begin{proof}
		Without loss of generality, we assume that the sequence $(j_1(n) - j_0(n))_{n\in \N}$ is strictly increasing. For each $k\in \N$ take $n_k\in \N$ such that  $ \frac{a_{j_1(n_k)}}{\alpha_{n_k}} <\frac{1}{2k}$ and $\frac{ \text{card}\left(  S_{j_0(n_k),\, j_1(n_k)}(\alpha_{n_k}) \right)-1}
		{j_1(n_k) - j_0(n_k)} > (1-k^{-1}).$ Define $N_k:=j_1(n_k) - j_0(n_k)$, $k \in \N$. Then 
		$$\text{card}\left\{ 1\le i \le N_k: \frac{\left\| \frac{e_{j_1(n_k)-i}}{\alpha_{n_k}} \right\|}{\left\| \frac{e_{j_1(n_k)}}{\alpha_{n_k}} \right\|}>k  \right\} \ge \text{card}\left(  S_{j_0(n_k),\, j_1(n_k)}(\alpha_{n_k}) \right) -1 > (1-k^{-1})N_k.$$
	\end{proof}
	As a consequence of Theorem \ref{T1}, we obtain the following corollary, which characterizes distributional chaos in the context of weighted shifts on Köthe sequence spaces. This corollary is obtained by directly applying Theorem \ref{T1} to the seminorms from Definition \ref{def1}.
	\begin{corollary}\label{cor}
		Consider a Köthe sequence space $X := \lambda_p(A,\mathbb{Z})$, 
		where $A := (a_{j,k})_{j \in \Z, k \in \N}$ is a Köthe matrix and 
		$p \in \{0\} \cup [1,\infty)$. Let $w := (w_n)_{n\in\mathbb{Z}}$ be a sequence of 
		nonzero scalars such that the bilateral weighted backward shift $B_w$ is a 
		well-defined and continuous operator on $X$. 
		\begin{itemize}
			\item[\rm (a)] If $p=0$, then $B_w$ is distributionally chaotic if and only if there exist $D \subset \N$ with $\udens(D)=1$ and $I \subset \Z$ such that the following conditions hold:
			\begin{itemize}
				\item[\rm (A1)] $\lim_{n\in D}a_{i-n,k}w_{i-n}\cdots w_{i-1}=0, \quad \text{for all }k \in \N\text{ and } i \in I.$
				
				\item[\rm (A2)] There exist $m \in \N$ and an increasing sequence $(N_k)_{k \in \N}$ of positive integers, such that for each $k \in \N$ there are $r:=r(k) \in \N$, indices $i_{1,k}, \cdots, i_{r,k} \in I$ and scalars $b_{1,k}, \cdots, b_{r,k} \in \K \setminus\left\{ 0 \right\}$ with $\max_{1\le j \le r}\left| a_{i_{j,k},p(k)}b_{j,k} \right| > 0$ and
				\begin{equation}\label{eqqq3}
					\card \left\{ 1 \le n \le N_k :\frac{\max_{1\le j\le r} \left| a_{i_{j,k}-n,m}b_{j,k} w_{i_{j,k}-n}\cdots w_{i_{j,k} -1} \right| }{\max_{1\le j \le r}\left| a_{i_{j,k},p(k)}b_{j,k} \right|}> k \right\}> (1- k^{-1}) N_k,
				\end{equation}
				where $p(k):= m$, if $1\le k\le m$, and $p(k):= k$, if $k>m$.
			\end{itemize}
			\item[\rm (b)] If $p \in [1, \infty)$, then $B_w$ is distributionally chaotic if and only if  there exist $D \subset \N$ with $\udens(D)=1$ and $I \subset \Z$ such that the following conditions hold: 
			\begin{itemize}
				\item[\rm (B1)] $\lim_{n\in D}a_{i-n,k}w_{i-n}\cdots w_{i-1}=0, \quad \text{for all }k \in \N\text{ and } i \in I.$
				\item[\rm (B2)] There exist $m \in \N$ and an increasing sequence $(N_k)_{k \in \N}$ of positive integers, such that for each $k \in \N$ there are $r:=r(k) \in \N$, indices $i_{1,k}, \cdots, i_{r,k} \in I$ and scalars $b_{1,k}, \cdots, b_{r,k} \in \K \setminus\left\{ 0 \right\}$ with $\sum_{j=1}^{r} \left| a_{i_{j,k},p(k)}b_{j,k} \right|^p > 0$ and
				\begin{equation}\label{eqq4}
					\card \left\{ 1 \le n \le N_k :\frac{\sum_{j=1}^{r} \left| a_{i_{j,k}-n,m}b_{j,k} w_{i_{j,k}-n}\cdots w_{i_{j,k} -1} \right|^p }{\sum_{j=1}^{r} \left| a_{i_{j,k},p(k)}b_{j,k} \right|^p}> k^p \right\} > (1- k^{-1}) N_k,
				\end{equation} 
				where $p(k):= m$, if $1\le k\le m$, and $p(k):= k$, if $k>m$.
			\end{itemize}
		\end{itemize}
	\end{corollary}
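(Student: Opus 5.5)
The plan is to specialize Theorem \ref{T1} to $X=\lambda_p(A,\Z)$ by computing every seminorm that appears there explicitly. Theorem \ref{T1} applies because the canonical vectors form a basis of $\lambda_p(A,\Z)$ and $B_w$ is assumed continuous. For a finitely supported vector $u=\sum_{j=1}^r c_j e_{i_j}$ with distinct indices, Definition \ref{def1} gives
\[
\|u\|_k=\max_{1\le j\le r}|a_{i_j,k}c_j| \quad (p=0),
\qquad
\|u\|_k=\Big(\sum_{j=1}^r |a_{i_j,k}c_j|^p\Big)^{1/p} \quad (p\ge 1).
\]
So condition (A) of Theorem \ref{T1} and condition (B) translate mechanically, as follows.

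For (A): the vector $w_{i-n}\cdots w_{i-1}e_{i-n}$ tends to $0$ along $D$ in $X$ if and only if each seminorm tends to $0$. The $k$-th seminorm equals $a_{i-n,k}|w_{i-n}\cdots w_{i-1}|$, and this holds for every $p$. Hence (A) is equivalent to (A1) and also to (B1); the absolute value on the weights may be dropped, since convergence to $0$ is unaffected.

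For (B): take the same $m$, the same sequence $(N_k)$, and the same data $r$, $i_{j,k}$, $b_{j,k}$.
\begin{itemize}
\item The nonvanishing hypothesis $\|\sum_j b_{j,k}e_{i_{j,k}}\|_{p(k)}\neq 0$ is exactly $\max_j|a_{i_{j,k},p(k)}b_{j,k}|>0$ when $p=0$, and $\sum_j|a_{i_{j,k},p(k)}b_{j,k}|^p>0$ when $p\ge 1$.
\item The numerator $\|\sum_j b_{j,k}w_{i_{j,k}-n}\cdots w_{i_{j,k}-1}e_{i_{j,k}-n}\|_m$ becomes the max, respectively the $p$-sum, of $|a_{i_{j,k}-n,m}b_{j,k}w_{i_{j,k}-n}\cdots w_{i_{j,k}-1}|$.
\item When $p\ge1$, the inequality ratio $>k$ is equivalent to $(\text{ratio})^p>k^p$. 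This puts it in the form of \eqref{eqq4}, where the ratio of $p$-th powers is compared with $k^p$.
\end{itemize}
The counting sets are therefore identical, and (B) is equivalent to (A2), respectively (B2).

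The one point needing care is the seminorm formula when the indices $i_{j,k}$ are not distinct, or when the shifted indices $i_{j,k}-n$ collide. In the bilateral case collisions cannot occur: the map $i\mapsto i-n$ is injective, so distinct $i_{j,k}$ remain distinct after shifting. It therefore suffices to note that in Theorem \ref{T1} we may assume the indices are distinct, by combining repeated coefficients. If a combined coefficient becomes zero, the corresponding term simply drops out. With distinct indices the displayed formulas are exact, and the equivalence in each direction is just a rewriting, so the corollary follows immediately from Theorem \ref{T1}.
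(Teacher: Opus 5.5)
Your proposal is correct and follows the paper's route: the paper obtains the corollary by evaluating the Köthe seminorms of Definition~\ref{def1} in conditions (A) and (B) of Theorem~\ref{T1}, and your treatment of repeated indices makes explicit a detail the paper leaves implicit. Your distinctness argument does not literally cover repeated indices in the data of (A2)/(B2) themselves, since summing those coefficients would change the displayed quotients; instead, for each distinct index $i$, keep the coefficient of largest modulus when $p=0$ and use $\bigl(\sum_{j:\, i_{j,k}=i}|b_{j,k}|^p\bigr)^{1/p}$ when $p\ge 1$, which leaves both quotients unchanged.
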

	\begin{remark}
		For a fixed \(p \in [1,\infty)\), recall that \(\ell^p(\Z)=\lambda_p(A,\Z)\), where \(A=(a_{j,k})_{j\in\Z,\,k\in\N}\) is given by \(a_{j,k}=1\) for all \(j\in\Z\) and \(k\in\N\). Likewise, \(c_0(\Z)=\lambda_0(A,\Z)\). Therefore, by Corollary~\ref{cor}, we obtain characterizations of distributional chaos for weighted shifts on \(c_0(\Z)\) and \(\ell^p(\Z)\) equivalent to those given in \cite{Nilson1}. A unilateral version of Corollary~\ref{cor} follows from Theorem~\ref{theo2}. We leave the details to the reader. 
	\end{remark}
	Recall that an operator $T:X \to X$ is said to be {\it hypercyclic} if there exists $x \in X$ whose orbit is dense, i.e., $\overline{\left\{ T^n(x):n\in \N_0 \right\}}= X$. By \cite[Theorem 4.13]{GE-P}, a weighted backward shift $B_{w'}$, with nonzero weights $w':=(w'_n)_{n \in \Z}$, is  hypercyclic if, and only if, there exists an increasing sequence $(n_k)_{k \in \N}$ of positive integers, such that for each $\ell \in \Z$ we have
	\begin{equation}\label{equa3}
		w'_{\ell - n_j} \cdots w'_{\ell - 1} e_{\ell - n_j} \to 0
		\quad \text{and} \quad
		\frac{e_{\ell + n_j}}{w'_{\ell} \cdots w'_{\ell + n_j - 1}} \to 0
		\quad \text{as } j \to \infty.
	\end{equation}
	Examples of shifts that are hypercyclic but not distributionally chaotic are already known; see, for instance, \cite[Theorem 7]{BaRu} (in fact, this is a more involved example: it is shown that the shift is frequently hypercyclic). In what follows, to illustrate the use of the necessary direction in Theorem \ref{T1} (in particular, Corollary \ref{cor}), we will give an example of a weighted backward shift on $s(\Z)$ that is  hypercyclic, but it is not distributionally chaotic.
	\begin{example}
		Consider the weighted backward shift $B_w$ on $s(\Z)$ with weights
		$$(w_j)_{j \in \Z}:=(\cdots,\underset{\text{Block }B_n}{\underbrace{\overset{n\text{ times}}{\overbrace{2^{(-1)^{n}},\cdots,2^{(-1)^{n}}}},\overset{n\text{ times}}{\overbrace{2^{(-1)^{n+1}},\cdots,2^{(-1)^{n+1}}}}}},\cdots,\underset{\text{Block }B_2}{\underbrace{2,2,\frac{1}{2},\frac{1}{2}}},\underset{\text{Block }B_1}{\underbrace{\frac{1}{2},2}},\underset{j \ge 0}{\underbrace{2,2,2,2,\cdots}}).$$
		For each $n\in \N$, denote by $I_n$ the set of indices that compose the Block $n$. For example, \(I_1=\{-2,-1\}\), \(I_2=\{-6,-5,-4,-3\}\), and so on. Moreover, we denote $-I_n:=\left\{ -j:j \in I_n \right\}$ ($n \in \N$). We need the following lemma to prove that $B_w$ is not distributionally chaotic:
		\begin{lemma} We have that
			$$  \ldens\left( \bigcup_{n \in \N}^{}(-I_{2n-1}) \right)>0.$$ 
		\end{lemma}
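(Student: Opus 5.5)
Block $B_n$ has $2n$ entries, so $|I_n| = 2n$. The blocks are listed leftward from index $-1$, so $I_1=\{-2,-1\}$, and in general
\[
I_n=\{-n(n+1),\dots,-n(n-1)-1\}, \qquad \text{since } \sum_{j<n}2j=n(n-1).
\]
Hence
\[
-I_n=\{n(n-1)+1,\dots,n(n+1)\},
\]
so the sets $-I_n$ are consecutive intervals of length $2n$ that partition $\N$. The plan is to count, for each $N\in\N$, how many elements of $\{1,\dots,N\}$ lie in the odd-indexed intervals, and to bound this count below by a fixed positive multiple of $N$.

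Fix $N$ and let $n$ be the unique index with $N\in -I_n$, i.e.\ $n(n-1)<N\le n(n+1)$. Every odd $\ell\le n-1$ has $-I_\ell\subset\{1,\dots,N\}$. For $n\ge 3$ there are at least $(n-1)/2 \ge 1$ such $\ell$, and summing over them gives
\[
\card\Big(\{1,\dots,N\}\cap\bigcup_{\ell}(-I_{2\ell-1})\Big)\ \ge \sum_{\substack{\ell\le n-1\\ \ell \text{ odd}}}2\ell .
\]
The odd $\ell\le n-1$ are $1,3,\dots$, and their sum $\sum 2\ell$ is comparable to half of $\sum_{\ell\le n-1}2\ell=n(n-1)$. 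Concretely:
\begin{itemize}
\item if $n-1=2q$, the sum is $2q^2=(n-1)^2/2$;
\item if $n-1=2q-1$, the sum is $2q^2=n^2/2$.
\end{itemize}
In either case it is at least $(n-1)^2/2$.

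Since $N\le n(n+1)$, the ratio is at least $(n-1)^2/\bigl(2n(n+1)\bigr)$. This tends to $1/2$ and is positive for $n\ge 2$. Together with the trivial positivity of the ratio for the finitely many small $N$, this gives $\ldens>0$; in fact $\ldens\ge 1/2$ asymptotically.

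The only step needing care is the bookkeeping of the block endpoints: the identification $-I_n=\{n(n-1)+1,\dots,n(n+1)\}$ and the fact that the current, possibly partial, block can be discarded without losing a positive proportion. The computation above handles both, since the lower bound uses only the complete odd blocks below the current one, and $N$ exceeds $n(n-1)$ by at most $2n=o(n^2)$.
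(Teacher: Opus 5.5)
Your proof is correct and follows essentially the same route as the paper: count the complete odd blocks lying in $\{1,\dots,N\}$ and divide by an upper bound for $N$ given by a block endpoint. The paper instead places $N$ between the right endpoints $2n(2n-1)$ and $(2n+1)(2n+2)$ of $-I_{2n-1}$ and $-I_{2n+1}$, which avoids your parity case split and gives the uniform bound $1/6$, whereas your tighter placement of $N$ gives the sharper limiting value $1/2$.
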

		\begin{proof}
			Denote by $\mathcal{A}:=\bigcup_{n \in \N}^{}(-I_{2n-1})$. Take $N\in \N$ with $N>2$. Then, there exists $n \in \N$ such that 
			\begin{equation}\label{equa1}
				(2n+1)(2n+2)=\sum_{j=1}^{2n+1}\card(I_j) \ge N \ge \sum_{j=1}^{2n-1}\card(I_j)= 2n(2n-1).
			\end{equation}
			Then,
			\begin{equation}\label{equa2}
				\text{card}\left( \mathcal{A} \cap \left\{ 1, \cdots , N \right\} \right) \ge \sum_{j=1}^{n}\underset{=2(2j-1)}{\underbrace{\text{card}(I_{2j-1})}}=2n^2.
			\end{equation}
			Thus, by (\ref{equa1}) and (\ref{equa2}), we have
			$$\frac{\text{card}\left( \mathcal{A} \cap \left\{ 1, \cdots , N \right\} \right)}{N}\ge \frac{2n^2}{(2n+1)(2n+2)}=\frac{1}{2 + \frac{3}{n} + \frac{1}{n^2}}> \frac{1}{6}.$$
			Therefore, $\ldens(\mathcal{A})\ge \frac{1}{6}.$
		\end{proof}
		Continuing with our example, we claim that $B_w$ is not distributionally chaotic. Indeed, for all $i\in\mathbb{Z}$, there does not exist a subset 
		$D\subset\mathbb{N}$ with $\udens(D)=1$ such that either condition (A1) or (B1) 
		of Corollary~\ref{cor} holds, since $\ldens(\mathcal{A})>0$ and there exists a 
		constant $C>0$ (depending on $i$) such that
		$$\left|a_{i-n,\ell} w_{i-n}\cdots w_{i-1} \right|\ge C, \quad \text{whenever } n\in (\mathcal{A}+i)\cap \mathbb{N}\text{ and } \ell \in \N,$$
		where $\mathcal{A}+i := \left\{ a+i: a \in \mathcal{A} \right\}$.  \\
		Now, consider the sequence $(n_k)_{k \in \N}$ defined by $n_k:= 2k(2k-1)+k$. Then, for each $i \in \Z$ there exists a constant $C_i >0$ such that  
		$$\left|a_{i-n_k,\ell} w_{i-n_k}\cdots w_i \right|\le C_i\frac{(\left| i-n_k \right|+1)^{\ell}}{2^k}, \quad \text{for all }\ell,k \in \N. $$
		Thus, $w_{i-n_k}\cdots w_{i-1}e_{i-n_k}\to 0$, when $k \to 0$. Moreover, there exists a constant $C'_{i}>0$ such that for all $\ell \in \N$ we have 
		$$\left\| \frac{e_{i + n_k}}{w_{i} \cdots w_{i + n_k - 1}} \right\|_\ell  \le C'_{i}\frac{(\left| i+n_k \right|+1)^{\ell}}{2^{n_k}}\to 0, \quad \text{when }k \to \infty.$$
		Then, $\frac{e_{i+n_k}}{w_{i}\cdots w_{i+n_k-1}}\to 0$ when $k\to \infty$. Therefore, by (\ref{equa3}), $B_w$ is  hypercyclic.
	\end{example}
	The literature already contains examples of distributionally chaotic shifts that are not hypercyclic; see, for instance, \cite[Example 13]{MOP}. In what follows, in order to illustrate the use of the sufficiency direction in Theorem \ref{T1} (in particular Corollary \ref{cor}), we present an example of a weighted backward shift on $\lambda_p(A, \Z)$ ($p \in \{0\} \cup [1, \infty)$), which is distributionally chaotic but not hypercyclic. 
	\begin{example}
		Consider the K\"othe matrix $A=(a_{j,k})_{j \in \Z,k \in \N}$ defined by
		$$\left( a_{j,k} \right)_{j \in \Z, k \in \N}:=(\underset{j \le0}{\underbrace{\cdots,1,1,1}},1,B_{1,k},1,1,2^k,B_{2,k},2^k,1, \cdots, 1,2^k,\cdots,n^k,B_{n,k},n^k,\cdots,2^k,1,\cdots),$$
		where 
		$$B_{n,k}:=(\underset{10^n\text{ times}}{\underbrace{(n+1)^k,(n+1)^k, \cdots, (n+1)^k}} ), \quad \text{for all }n,k\in \N.$$
		We denote by $I_n$ the set of indices that compose the block $B_{n,k}$ $(n,k\in\mathbb{N})$, note that these indices do not depend on $k$. For example $I_1=\left\{2,3,\cdots , 11  \right\}$. Now, consider the weighted backward shift $B_w$ on $\lambda_p(A, \Z)$ ($p\in \left\{ 0 \right\}\cup [1, \infty)$) with weights $w_{n}:=\frac{1}{2}$, for $n <0$ and $w_n:= 1$, for $n\ge 0$. 
		Since 
		$$\left\| \frac{e_{n}}{w_{0} \cdots w_{n - 1}} \right\|_k \ge 1, \quad \text{for all }n,k \in \N,$$
		then $B_w$ is not  hypercyclic. Now, we will prove that $B_w$ is distributionally chaotic. Note that for each $i \in \Z$ and $k \in \N$ there exists a constant $C>0$ such that
		$$\lim_{n \to \infty} a_{i-n,k}w_{i-n}\cdots w_{i-1} \le C. \lim_{n \to \infty} \frac{1}{2^n}=0.$$
		Then, conditions (A1) and (B1) of Corollary \ref{cor} are satisfied. Note that if $j= 2\sum_{\ell=1}^{N}\ell +\sum_{\ell=1}^{N}10^\ell$ for some $N \in \N$, then $a_{j,k}=1$ for all $k \in \N$. To prove conditions (A2) and (B2) we take $m=1$ and for each $k \in \N$ take $N_k \in \N$ where $N_k:=2\sum_{\ell=1}^{n_k}\ell +\sum_{\ell=1}^{n_k}10^\ell$, for some $n_k \in \N$ sufficiently large such that 
		\begin{equation}\label{eq}
			\frac{\sum_{\ell=k}^{n_k}10^\ell}{N_k}> (1-k^{-1}).
		\end{equation}
		Now, observe that 
		\begin{equation}\label{eqq}
			\frac{\left| a_{N_k-j,1}w_{N_k-j}\cdots w_{N_k-1} \right|}{\left| a_{N_k,k} \right|}= \left| a_{N_k-j,1} \right|> k,\quad \text{for all }j \in \N\text{ s.t. }N_k-j\in \bigcup_{i=k}^{n_k}I_{i}.
		\end{equation}
		Thus, by (\ref{eq}) and (\ref{eqq}) we obtain
		$$\text{card}\left\{ 1\le j\le N_k: \frac{\left| a_{N_k-j,1}w_{N_k-j}\cdots w_{N_k-1} \right|}{\left| a_{N_k,k} \right|}> k \right\}\ge \sum_{\ell=k}^{n_k}10^\ell> (1-k^{-1})N_k.$$
		Then, conditions (A2) and (B2) of Corollary \ref{cor} are satisfied with $r=1$, $i_{1,k}= N_k$ and $b_{1,k}=1$. Therefore, $B_w$ is distributionally chaotic.
	\end{example}
	
	\section{Mean Li--Yorke chaos}
	In the last decade, the study of average properties, such as mean equicontinuity
	and mean sensitivity, has become increasingly popular. In this context, the
	notion of mean Li--Yorke chaos has gained prominence; see, for instance,
	\cite{BBP,Nilson1,24,29,Chi} for recent works addressing this property.
	Below, we provide the formal definition of this concept.
	\begin{definition}\label{ldef}
		Let $(X,d)$ be a metric space and let $f:X\to X$ be a continuous map.
		A pair $(x,y)\in X\times X$, $x\neq y$, is called a \emph{mean Li--Yorke pair}
		for $f$ if
		\[
		\liminf_{n\to\infty} \frac1n \sum_{k=1}^{n} d\big(f^k(x), f^k(y)\big) = 0
		\quad\text{and}\quad
		\limsup_{n\to\infty} \frac1n \sum_{k=1}^{n} d\big(f^k(x), f^k(y)\big) > 0.
		\]
		The function $f$ is said to be \emph{mean Li--Yorke chaotic}
		if there exists an uncountable set $S\subset X$ such that every
		pair $(x,y)\in S\times S$ of distinct points is a mean Li--Yorke pair.
	\end{definition}
	We next present some definitions related to the notion of mean Li--Yorke chaos.
	\begin{definition}
		Let $(X,d)$ be a metric space and let $f:X\to X$ be a continuous map.
		\begin{itemize}
			\item[\rm (a)] We say that $f$ is \emph{mean sensitive} if there exists $\delta>0$ such that, for every $x\in X$ and every $\varepsilon>0$, there exists $y\in X$ with $d(x,y)<\varepsilon$ and
			\[
			\limsup_{n\to\infty}\frac{1}{n}\sum_{i=1}^{n} d(f^i( x),f^i( y))\ge \delta.
			\] 
			\item[\rm (b)] For a given positive number $\delta$, a pair $(x,y)\in X\times X$ is called a \emph{mean Li--Yorke $\delta$-chaotic pair} if
			\[
			\liminf_{n\to\infty}\frac{1}{n}\sum_{i=1}^{n} d(f^i (x),f^i( y))=0
			\quad \text{and} \quad
			\limsup_{n\to\infty}\frac{1}{n}\sum_{i=1}^{n} d(f^i( x),f^i (y))\ge \delta.
			\]
			We say that $f$ is \emph{mean Li--Yorke sensitive} if there exists $\delta>0$ such that, for every $x\in X$ and every $\varepsilon>0$, there exists $y\in X$ with $d(x,y)<\varepsilon$ such that $(x,y)$ is a mean Li--Yorke $\delta$-chaotic pair. 
		\end{itemize}
	\end{definition}
	It is clear that every mean Li--Yorke sensitive system is mean sensitive. From now on, given a Fréchet space $X$ endowed with a family of seminorms 
	$\bigl(\|\cdot\|_n\bigr)_{n\in\mathbb{N}}$, we will always consider the compatible 
	metric $d$ to be the one given in \eqref{d}. Recall that the compatible metric $d$ satisfies the following proprieties: 
	\begin{itemize}
		\item[(P1)] $d(x_1+x_2,0)\le d(x_1,0)+d(x_2,0)$ for all $x_1,x_2\in X$.
		\item[(P2)] $d(\lambda x_1,0)\le (1+|\lambda|)d(x_1,0)$ for all $\lambda\in\mathbb{K}$ and $x_1\in X$.
	\end{itemize}
	To prove our results in this section, we need the following definitions and lemmas from \cite{Chi}.
	\begin{definition} Let $X$ be a Fréchet space with the compatible metric $d$.
		Let $T : X \to X$ be a continuous linear operator. A vector $x\in X$ is called \textit{absolutely mean} \textit{semi-irregular} (or {\it semi-irregular point}) if
		$$\liminf_{n\to \infty}\frac{1}{n}\sum_{k=1}^{n}d(T^kx,0)=0\quad \text{and}\quad \limsup_{n\to \infty}\frac{1}{n}\sum_{k=1}^{n}d(T^kx,0)>0.$$
	\end{definition}
	\begin{definition}
		Let $T:X \to X$ be a continuous linear operator on a Fréchet space $X$ with the compatible metric $d$. The \textit{mean asymptotic cell} and the \textit{mean proximal cell} of $0$ are defined by 
		$$\text{MAsym}(T,0):=\left\{ x \in X: \lim_{n\to \infty} \frac{1}{n}\sum_{k=1}^{n}d(T^kx,0)=0 \right\}\quad \text{and}$$
		$$\text{MProx}(T,0):=\left\{ x \in X: \liminf_{n\to \infty} \frac{1}{n}\sum_{k=1}^{n}d(T^kx,0)=0 \right\}, \quad \text{respectively}.$$
	\end{definition}
	\begin{remark}\label{remark1}
		As observed in \cite{Chi}, \(\text{MProx}(T,0)\) is a \(G_\delta\) set. Moreover, \cite[Lemma~4.7]{Chi} proves that if \(\text{MAsym}(T,0)\) is residual, then \(\text{MAsym}(T,0)\) coincides with the whole space.
	\end{remark}
	\begin{lemma}\cite[Proposition 4.11]{Chi}\label{Le0}
		Let $X$ be a Fr\'echet space, let $T:X\to X$ be a continuous linear operator, and let $d$ be a compatible metric on $X$. Then the following assertions are equivalent:
		\begin{itemize}
			\item[\rm (a)] $T$ is mean sensitive;
			
			\item[\rm (b)] there exist a sequence $(y_k)_k$ in $X$ and an increasing sequence $(N_k)_k$ in $\mathbb{N}$ such that $\lim_{k\to\infty} y_k=0$ and
			\[
			\inf_{k\in\mathbb{N}} \frac{1}{N_k}\sum_{i=1}^{N_k} d(T^i y_k,0)>0.
			\]
		\end{itemize}
	\end{lemma}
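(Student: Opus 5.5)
The plan is to prove the two implications separately, using only linearity of $T$, the translation invariance of $d$ (which holds because $d$ in \eqref{d} depends only on $x-y$), property (P1), and the Baire category theorem in the complete metric space $(X,d)$.

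\emph{(a)$\Rightarrow$(b).} Let $\delta>0$ be the mean sensitivity constant, and apply the definition at the point $x=0$.
\begin{itemize}
\item For each $k\in\mathbb{N}$, mean sensitivity gives $y_k\in X$ with $d(y_k,0)<1/k$ and
\[
\limsup_{n\to\infty}\frac1n\sum_{i=1}^n d(T^iy_k,0)\ge\delta .
\]
\item By the $\limsup$, there are infinitely many $n$ with $\frac1n\sum_{i=1}^n d(T^iy_k,0)>\delta/2$. So we may choose such an $N_k$ with $N_k>N_{k-1}$.
\item Then $(N_k)$ is increasing, $y_k\to 0$, and $\inf_k \frac1{N_k}\sum_{i=1}^{N_k} d(T^iy_k,0)\ge\delta/2>0$.
\end{itemize}

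\emph{(b)$\Rightarrow$(a).} Put $c:=\inf_k \frac1{N_k}\sum_{i=1}^{N_k}d(T^iy_k,0)>0$ and write $A_n(z):=\frac1n\sum_{i=1}^n d(T^iz,0)$.

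Since $d(T^i(x+z),T^ix)=d(T^iz,0)$, it suffices to show that the set
\[
G:=\Bigl\{z\in X:\ \limsup_{n\to\infty}A_n(z)\ge c/3\Bigr\}
\]
has points arbitrarily close to $0$. Then, for any $x$ and $\varepsilon$, taking $z\in G$ with $d(z,0)<\varepsilon$ and $y:=x+z$ gives $d(x,y)<\varepsilon$ and $\limsup_n \frac1n\sum_{i=1}^n d(T^ix,T^iy)\ge c/3$. This is mean sensitivity with $\delta=c/3$.

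To obtain such $z$, I will show that $G$ contains a dense $G_\delta$ set.
\begin{itemize}
\item For $m\in\mathbb{N}$, let $G_m:=\{z: A_n(z)>c/3 \text{ for some } n\ge m\}$.
\item Each $A_n$ is continuous, so each $G_m$ is open, and clearly $\bigcap_m G_m\subset G$.
\end{itemize}

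The key step is density of each $G_m$. Fix $z_0\in X$. If $z_0\in G_m$ there is nothing to prove. Otherwise $A_n(z_0)\le c/3$ for all $n\ge m$. Choose $k$ large enough that $N_k\ge m$ and $d(y_k,0)$ is as small as desired. By (P1), $d(T^iy_k,0)\le d(T^i(z_0+y_k),0)+d(T^iz_0,0)$, hence
\[
A_{N_k}(z_0+y_k)\ \ge\ A_{N_k}(y_k)-A_{N_k}(z_0)\ \ge\ c-\tfrac c3\ >\ \tfrac c3 .
\]
So $z_0+y_k\in G_m$, and $d(z_0+y_k,z_0)=d(y_k,0)$ is arbitrarily small, which gives density.

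By the Baire category theorem, $\bigcap_m G_m$ is dense in $X$, so it meets every ball around $0$, which is what was needed. The main point of the argument is this density step. Hypothesis (b) only controls the single average $A_{N_k}$ for each $y_k$, not a $\limsup$, and the dichotomy on whether $z_0\in G_m$, with the threshold $c/3<c/2$, is what converts that one-time control into a residual set.
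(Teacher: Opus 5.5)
Your proof is correct and complete. The paper does not prove this lemma; it quotes it from \cite[Proposition 4.11]{Chi}. So there is no in-paper argument to compare with, and your proof serves as a self-contained justification.

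Direction (a)$\Rightarrow$(b) is simply mean sensitivity applied at the base point $0$, using $T^i0=0$. For (b)$\Rightarrow$(a), the Baire step is the right tool. Hypothesis (b) controls only a single Ces\`aro average $A_{N_k}(y_k)$ for each $y_k$. Your open dense sets $G_m$ turn these one-time controls into a residual set of vectors $z$ with $\limsup_n A_n(z)\ge c/3$, which then transfers to every base point $x$.

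Two details deserve to be stated explicitly. First, the inequality $d(T^iy_k,0)\le d(T^i(z_0+y_k),0)+d(T^iz_0,0)$ uses (P1) together with $d(-u,0)=d(u,0)$, and the latter follows from translation invariance. Second, both this step and the reduction $d(T^i(x+z),T^ix)=d(T^iz,0)$ require $d$ to be translation invariant. The lemma should therefore be read with $d$ the metric \eqref{d} fixed in Section 4, or any translation-invariant compatible metric, rather than an arbitrary metric inducing the topology.
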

	
	\begin{lemma}\cite[Theorem 4.15]{Chi}\label{L1}
		Let $T:X \to X$ be a continuous linear operator on a Fréchet space $X$. Then the following statements are equivalent:
		\begin{itemize}
			\item[\rm (a)] $T$ is mean Li--Yorke chaotic; 
			\item[\rm (b)] $T$ is mean Li--Yorke chaotic sensitive;
			\item[\rm (c)] $T$ admits an absolutely mean semi-irregular vector.
		\end{itemize}
	\end{lemma}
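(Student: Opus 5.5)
The plan is to prove (a)$\Rightarrow$(c)$\Rightarrow$(b)$\Rightarrow$(a). Throughout I use that $d$ is translation invariant, so $d(T^kx,T^ky)=d(T^k(x-y),0)$ by linearity of $T$, together with properties (P1) and (P2).

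\emph{(a)$\Rightarrow$(c) and (b)$\Rightarrow$(a).} For (a)$\Rightarrow$(c), pick two distinct points $x,y$ of the uncountable scrambled set $S$. The identity above turns the mean Li--Yorke conditions for $(x,y)$ into the semi-irregularity conditions for $x-y\neq 0$. For (b)$\Rightarrow$(a), I would run the usual Mycielski-type construction. From sensitivity at $x=0$ we get semi-irregular vectors arbitrarily close to $0$ with $\limsup\ge\delta$, and these generate a residual set in a suitable closed subspace. Mycielski's theorem applied to the relation ``$x-y$ is semi-irregular'' then yields a Cantor scrambled set. Alternatively, once (c) is available, one can argue directly with the subspace $Y$ built below.

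\emph{(c)$\Rightarrow$(b).} Let $u$ be semi-irregular and set $Y:=\overline{\operatorname{span}}\{T^nu:n\ge 0\}$, a closed $T$-invariant subspace.

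First I show that $\operatorname{span}\{T^nu\}\subset \text{MProx}(T,0)$. Choose $N_j\to\infty$ along which $\frac1{N_j}\sum_{k\le N_j}d(T^ku,0)\to0$. Each average for $T^{n}u$ differs from the corresponding average for $u$ by at most $n/N_j$, because $d\le 1$. For a finite combination $\sum c_iT^{n_i}u$, (P1) and (P2) bound its average by $\sum(1+|c_i|)$ times the averages of the $T^{n_i}u$. Along the same $N_j$ all of these tend to $0$.

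Hence $\text{MProx}\cap Y$ is dense in $Y$. It is also a $G_\delta$ by Remark~\ref{remark1}, so it is residual in $Y$.

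Next I show that $\text{MAsym}(T,0)\cap Y$ is meager in $Y$. It is a linear subspace, by (P1) and (P2). It is Borel, being an $F_{\sigma\delta}$ obtained from continuous averages. It is proper, since $u\notin\text{MAsym}$. By Pettis' theorem, a non-meager Borel subspace with the Baire property is open and hence equals $Y$, which is impossible; this is also the content of Remark~\ref{remark1}.

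Consequently $Y\setminus\text{MAsym}=\bigcup_j M_{1/j}$, where
\[
M_\delta:=\Bigl\{x:\limsup_n\frac1n\sum_{k\le n}d(T^kx,0)\ge\delta\Bigr\}.
\]
Each $M_\delta$ is a $G_\delta$, being an intersection of open sets defined by continuous averages. By Baire, some $M_\delta\cap Y$ is comeager in a nonempty open set $U\subset Y$.

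\emph{Main obstacle.} The hardest step is upgrading this local residuality to semi-irregular vectors with $\limsup\ge\delta'$ arbitrarily close to $0$, with $\delta'$ uniform. Scaling is useless here, because the truncated metric kills the averages of $\lambda z$ as $\lambda\to0$.

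My first attempt would go through mean sensitivity. I would try to produce, via Lemma~\ref{Le0}, vectors $y_k\to0$ in $Y$ whose averages at times $N_k$ stay uniformly large. I would build them as differences $a'-a$ of points of $U$, using that $a$ can be taken in $\text{MProx}$ while $a'\in M_\delta$ lies near $a$. Along times where the averages of $a$ are small, the averages of $a'-a$ are at least those of $a'$ minus those of $a$. Once $T|_Y$ is mean sensitive, the sets $M_{\delta'}\cap Y$ should become dense $G_\delta$'s in $Y$ for a fixed $\delta'$.

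Intersecting with the residual set $\text{MProx}\cap Y$ would then give semi-irregular vectors $z$ with $\limsup\ge\delta'$ arbitrarily close to $0$. For any $x\in X$ and $\varepsilon>0$, setting $y=x+z$ with $z$ small makes $(x,y)$ a mean Li--Yorke $\delta'$-chaotic pair, which is (b).
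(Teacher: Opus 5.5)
The paper gives no proof of this statement; it is quoted from \cite[Theorem 4.15]{Chi}, so I assess your argument on its own terms.

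\textbf{What works.} (a)$\Rightarrow$(c) is correct. (b)$\Rightarrow$(c) is immediate by taking $x=0$, so your (b)$\Rightarrow$(a) is really (c)$\Rightarrow$(a), and in substance it is sound. Your Steps 1 and 2 show that the semi-irregular vectors of $Y$, namely $(\mathrm{MProx}(T,0)\cap Y)\setminus \mathrm{MAsym}(T,0)$, form a comeager subset of $Y$. The space $Y$ is separable (closed span of a countable set) and perfect (nonzero). The relation $\{(x,y): x-y \text{ semi-irregular}\}$ is mapped onto (semi-irregular vectors)$\times Y$ by the homeomorphism $(x,y)\mapsto(x-y,y)$, so it is comeager in $Y\times Y$ and Mycielski's theorem applies. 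You should state these points explicitly, but nothing is missing there.

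\textbf{The gap: (c)$\Rightarrow$(b).} Write $A_N(v):=\frac1N\sum_{k=1}^N d(T^kv,0)$. Your difference construction does not yield mean sensitivity. The bound $A_N(a'-a)\ge A_N(a')-A_N(a)$ only helps at times $N$ where $A_N(a)$ is small and $A_N(a')$ is large simultaneously, and nothing synchronizes the two. Membership $a'\in M_\delta$ gives large averages only along $a'$'s own sequence of times, which may avoid the times where $A_N(a)$ is small. Worse, for each fixed $N$, continuity of $A_N$ forces $A_N(a')\to A_N(a)$ as $a'\to a$. So you obtain no uniform lower bound for the averages of $a'-a$, and Lemma~\ref{Le0} cannot be invoked. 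Everything after that (density of $M_{\delta'}\cap Y$, small semi-irregular $z$) rests on this unproved step.

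\textbf{The missing idea.} You are right that scaling fails; what works is moving along the orbit.
\begin{itemize}
\item Since $\liminf_N A_N(u)=0$, also $\liminf_n d(T^nu,0)=0$. Otherwise $d(T^nu,0)\ge c>0$ eventually, which would force $\liminf_N A_N(u)\ge c$.
\item Since $d\le 1$, you have $|A_N(T^nu)-A_N(u)|\le n/N$. Hence every $T^nu$ is semi-irregular, with the same value $\limsup_N A_N(T^nu)=\beta:=\limsup_N A_N(u)>0$.
\item Given $x\in X$ and $\varepsilon>0$, choose $n$ with $d(T^nu,0)<\varepsilon$ and put $y:=x+T^nu$. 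By translation invariance, $d(x,y)<\varepsilon$ and $d(T^ix,T^iy)=d(T^i(T^nu),0)$.
\end{itemize}
So $(x,y)$ is a mean Li--Yorke $\beta$-chaotic pair. This gives (b) with $\delta=\beta$ directly, with no Baire-category argument needed for this implication.
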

	\begin{lemma}\cite[Theorem 4.27]{Chi}\label{L3}
		Let $T:X \to X$ be a continuous linear operator on a Fréchet space $X$ with the compatible metric $d$. Then,  the following statements are equivalent:
		\begin{enumerate}
			\item[\rm (a)] $T$ admits a dense set of absolutely mean semi-irregular vectors;
			\item[\rm (b)] the mean proximal cell of $0$ is dense in $X$ and there exists $x \in X$ such that
			\[
			\limsup_{n \to \infty} \frac{1}{n} \sum_{i=1}^{n} d(T^{i}x,0) > 0;
			\]
		\end{enumerate}
	\end{lemma}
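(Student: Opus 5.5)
For (a)$\Rightarrow$(b) I would argue directly. Every absolutely mean semi-irregular vector lies in $\text{MProx}(T,0)$ by the first half of its definition. So if the semi-irregular vectors are dense, the mean proximal cell of $0$ is dense. Any one semi-irregular vector $x$ also satisfies $\limsup_n \frac1n\sum_{i=1}^n d(T^ix,0)>0$ by the second half of the definition. This gives (b).

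For (b)$\Rightarrow$(a), the set of semi-irregular vectors is exactly $\text{MProx}(T,0)\setminus \text{MAsym}(T,0)$. By Remark \ref{remark1}, $\text{MProx}(T,0)$ is a $G_\delta$ set. It is dense by hypothesis, hence residual. I would also show that $\text{MAsym}(T,0)$ is a linear subspace. Closure under sums follows from (P1) together with linearity of $T$, since $d(T^k(x+y),0)\le d(T^kx,0)+d(T^ky,0)$. Closure under scalars follows from (P2), since $d(T^k(\lambda x),0)\le(1+|\lambda|)d(T^kx,0)$. The vector $x$ in (b) does not belong to $\text{MAsym}(T,0)$, so $\text{MAsym}(T,0)\neq X$. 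By Remark \ref{remark1} (\cite[Lemma 4.7]{Chi}), $\text{MAsym}(T,0)$ is then not residual in $X$.

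Now suppose, for contradiction, that some nonempty open set $U$ contains no semi-irregular vector. Then $\text{MProx}(T,0)\cap U\subset \text{MAsym}(T,0)$. Since $\text{MProx}(T,0)\cap U$ is residual in $U$, the set $\text{MAsym}(T,0)$ is residual in $U$; in particular it meets $U$, say at $u_0$. I would use the linear structure to spread this to all of $X$. Because $\text{MAsym}(T,0)$ is a subspace, it is invariant under translation by $-u_0$ and under multiplication by any nonzero scalar, and both maps are homeomorphisms of $X$. Hence $\text{MAsym}(T,0)$ is residual in each open set $V_n:=n(U-u_0)$, $n\in\N$. Since $U-u_0$ is a neighborhood of $0$, it is absorbing, so $X=\bigcup_n V_n$.

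This last step is where I expect the only real care to be needed. The complement of $\text{MAsym}(T,0)$ is meager in each $V_n$. A subset of an open set that is meager in that open set is meager in $X$, and a countable union of meager sets is meager. Therefore the complement of $\text{MAsym}(T,0)$ is meager in $X$, so $\text{MAsym}(T,0)$ is residual in $X$. This contradicts the previous paragraph. Hence every nonempty open set contains a semi-irregular vector, which proves (a).
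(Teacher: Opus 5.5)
Your proof is correct. The paper gives no proof of this lemma; it imports it from \cite{Chi}. So there is no in-paper argument to compare against, and yours is a self-contained derivation from the two facts in Remark~\ref{remark1} plus (P1) and (P2).

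Two small remarks.

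First, you do not need the Baire category theorem to get $u_0$. Since $\text{MProx}(T,0)$ is dense, $\text{MProx}(T,0)\cap U\neq\emptyset$, and under your contradiction hypothesis this set lies in $\text{MAsym}(T,0)$. The only genuine use of Baire is inside \cite[Lemma 4.7]{Chi}. For a linear subspace $L$ that lemma reduces to one observation: if $y\notin L$, then $L$ and $y+L$ are disjoint translates, so they cannot both be residual.

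Second, a common alternative route is to note that $\text{MAsym}(T,0)$ is a Borel subspace; it is even $F_{\sigma\delta}$, being an intersection over $m$ of unions over $N$ of the closed sets $\bigcap_{n\ge N}\{x:\frac1n\sum_{k=1}^n d(T^kx,0)\le \frac1m\}$. By Pettis' theorem it is then either open, hence equal to $X$, or meager. Since $\text{MProx}(T,0)$ is a dense $G_\delta$, this gives the stronger conclusion that the semi-irregular vectors form a residual set.

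Your localize-and-spread argument (translate by $-u_0$, dilate by $n$, use that $U-u_0$ is absorbing) avoids the Baire-property and Pettis machinery. It yields exactly the density required in (a), and the same reasoning shows the semi-irregular vectors are non-meager in every nonempty open set.
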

	
	Let $X$ be a Fréchet sequence space over $\Z$ with basis $(e_n)_{n \in \Z}$. Consider the following condition:
	\begin{itemize}
		\item[\rm (C)] For each $n\in \N$, $m \in \Z$ and $x=(x_j)_{j \in \Z}$, we have:
		$$\left| x_m \right|\left\| e_m \right\|_n \le \left\| x \right\|_n.$$
	\end{itemize}
	\begin{lemma}\label{lemex}
		Let $X$ be a Fréchet sequence space over $\mathbb{Z}$, with the compatible metric $d$. Suppose that the sequence 
		$(e_n)_{n \in \mathbb{Z}}$ of canonical vectors is a basis. Suppose that the bilateral weighted backward shift $B_w$, with nonzero weights $w := (w_n)_{n \in \mathbb{Z}}$, is well-defined and continuous on $X$. Then the following statements are equivalent:
		\begin{itemize}
			\item[\rm (a)] $\liminf_{n\to \infty}\frac{1}{n}\sum_{k=1}^{n}d(w_{j-k}\cdots w_{j-1}e_{j-k},0)=0,$ for some $j \in \Z$; 
			\item[\rm (b)] $\liminf_{n\to \infty}\frac{1}{n}\sum_{k=1}^{n}d(w_{i-k}\cdots w_{i-1}e_{i-k},0)=0,$ for all $i \in \Z$. 
		\end{itemize}
	\end{lemma}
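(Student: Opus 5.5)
The implication (b) $\Rightarrow$ (a) is trivial, so only (a) $\Rightarrow$ (b) needs an argument. The plan is to write $v_k^{(i)} := w_{i-k}\cdots w_{i-1}e_{i-k} = (B_w)^k e_i$ for $i\in\Z$, $k\in\N$. Then hypothesis (a) says that $e_j\in \text{MProx}(B_w,0)$, and the goal is to transfer this to every $e_i$. The key observation is that the orbits of different canonical vectors differ only by a shift in time and a fixed nonzero scalar.

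\textbf{Case $i<j$.} Put $s:=j-i>0$. Then $(B_w)^s e_j = w_{i}\cdots w_{j-1}\, e_i = c\, e_i$ with $c:=w_i\cdots w_{j-1}\neq 0$. Hence $v_k^{(i)} = c^{-1}(B_w)^{k+s}e_j = c^{-1} v_{k+s}^{(j)}$. By (P2),
\[
d(v_k^{(i)},0)\le (1+|c|^{-1})\, d(v_{k+s}^{(j)},0),
\]
and therefore
\[
\frac1n\sum_{k=1}^n d(v_k^{(i)},0)\le (1+|c|^{-1})\,\frac{n+s}{n}\cdot\frac{1}{n+s}\sum_{k=1}^{n+s} d(v_k^{(j)},0).
\]
Taking the liminf along the subsequence $n_\ell - s$, where $(n_\ell)$ realizes the liminf $0$ for $j$, gives liminf $0$ for $i$.

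\textbf{Case $i>j$.} Put $s:=i-j$. Now $(B_w)^s e_i = c\, e_j$ with $c = w_j\cdots w_{i-1}\neq 0$. Consequently, for $k\ge s$,
\[
v_k^{(i)} = (B_w)^{k-s}(B_w)^s e_i = c\, v_{k-s}^{(j)},
\]
with the convention $v_0^{(j)}=e_j$. The first $s$ terms of the sum are bounded by a constant, since $d\le 1$. Thus
\[
\frac1n\sum_{k=1}^n d(v_k^{(i)},0)\le \frac{s}{n} + (1+|c|)\,\frac1n\sum_{k=1}^{n-s} d(v_k^{(j)},0) + \frac{(1+|c|)\,d(e_j,0)}{n}.
\]
Choosing $n=n_\ell+s$, where $(n_\ell)$ realizes the liminf for $j$, the right-hand side tends to $0$.

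The main step is simply identifying the relation $(B_w)^s e_j = c\,e_{j-s}$. After that, the only things to handle carefully are the shifts in the averaging window, which cost only $O(s/n)$ or a factor $(n+s)/n\to 1$, together with the bound $d\le 1$ from \eqref{d}. No assumption beyond the nonvanishing of the weights and property (P2) is needed, and condition (C) is not used here.
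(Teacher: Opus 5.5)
Your proof is correct and is essentially the paper's argument: both write the orbit of $e_i$ as a time-shifted nonzero multiple of the orbit of $e_j$ (via $(B_w)^{s}e_j=c\,e_{j-s}$), absorb the constant with (P2), bound the finitely many initial terms using $d\le 1$, and evaluate the averages along $n_\ell\pm s$, exactly as the paper does with $N_k=n_k\pm|\ell-j|$. In your case $i>j$ the extra term $(1+|c|)\,d(e_j,0)/n$ is redundant, since the $k=s$ term is already among the first $s$ terms bounded by $s/n$, but it does no harm.
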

	\begin{proof}
		It is obvious that (b) implies (a). Now, suppose that (a) holds for some $j\in \Z$. Then, there exists an increasing sequence $(n_k)_{k\in \N}$ of positive integers such that 
		$$\lim_{k\to \infty}\frac{1}{n_k}\sum_{i=1}^{n_k}d(w_{j-i}\cdots w_{j-1}e_{j-i},0)=0.$$
		Take $\ell \in \Z$ with $\ell>j$ and define $N_k:= n_k+(\ell-j)$, for each $k \in \N$. Then
		\begin{align*}
			\frac{1}{N_k}\sum_{i=1}^{N_k}d(w_{\ell-i}\cdots w_{\ell-1}e_{\ell-i},0)&=\frac{1}{N_k}\sum_{i=1}^{\ell-j}d(w_{\ell-i}\cdots w_{\ell-1}e_{\ell-i},0)+ \frac{1}{N_k}\sum_{i=\ell-j +1}^{N_k}d(w_{\ell-i}\cdots w_{\ell-1}e_{\ell-i},0)\\
			&\le \frac{\ell-j}{N_k}+  \frac{1}{N_k}\sum_{i=\ell-j +1}^{N_k}d(w_{\ell-i}\cdots w_{j-1}\overset{:=C}{\overbrace{w_{j}\cdots w_{\ell-1}}}e_{\ell-i},0)\\ 
			&=\frac{\ell-j}{N_k} + \frac{1}{N_k}\sum_{r=1}^{n_k}d(C\,w_{j-r}\cdots w_{j-1}e_{j-r},0)\\ 
			&\le \frac{\ell-j}{N_k} + \frac{n_k (1+|C|)}{N_k}\frac{1}{n_k}\sum_{r=1}^{n_k}d(w_{j-r}\cdots w_{j-1}e_{j-r},0)\overset{k\to \infty}{\longrightarrow} 0.
		\end{align*}
		Now, take $\ell \in \Z$ with $\ell<j$ and define $N_k:= n_k-(j-\ell)$, for each $k \in \N$. Suppose that $N_k>0$, for all $k\in \N$, otherwise, it suffices to discard the finitely many negative terms and reindex the sequence. Then
		\begin{align*}
			\frac{1}{N_k}\sum_{i=1}^{N_k}d(w_{\ell-i}\cdots w_{\ell-1}e_{\ell-i},0)&=\frac{1}{N_k}\sum_{i=1}^{N_k}d(\overset{:=C}{\overbrace{\frac{1}{w_{\ell}\cdots w_{j-1}}}}w_{\ell-i}\cdots w_{\ell-1}w_{\ell}\cdots w_{j-1} e_{\ell-i},0)\\
			&\le \frac{(1+|C|)}{N_k}\sum_{i=1}^{N_k}d(w_{\ell-i}\cdots w_{j-1} e_{\ell-i},0)\\
			&\le \frac{(1+|C|)n_k}{N_k}\frac{1}{n_k}\sum_{r=(j-\ell)+1}^{n_k}d(w_{j-r}\cdots w_{j-1} e_{j-r},0)\overset{k\to \infty}{\longrightarrow} 0.
		\end{align*}
	\end{proof}
	
	\begin{theorem}\label{T2}
		Let $X$ be a Fréchet sequence space over $\mathbb{Z}$, endowed with the compatible metric $d$. Suppose that the sequence 
		$(e_n)_{n \in \mathbb{Z}}$ of canonical vectors is a basis and that condition (C) holds. Suppose that the bilateral weighted backward shift $B_w$, with nonzero weights $w := (w_n)_{n \in \mathbb{Z}}$, is well-defined and continuous on $X$. Then, $B_w$ is mean Li--Yorke chaotic if and only if the following conditions hold:
		\begin{itemize}
			\item[\rm (A)] $\liminf_{n\to \infty}\frac{1}{n}\sum_{k=1}^{n}d(w_{-k}\cdots w_{-1}e_{-k},0)=0;$
			\item[\rm (B)]  there exist $\varepsilon>0$ and an increasing sequence $(N_k)_{k \in \N}$ of positive integers, such that for each $k\in \N$ there are $r:= r(k) \in \N$ and scalars $b_{-r,k},\cdots , b_{r,k} \in \K$ with $d \left( \sum_{j=-r}^{r} b_{j,k}e_{j},0 \right)<\frac{1}{k}$ and
			$$\frac{1}{N_k}\sum_{i=1}^{N_k}d \left( \sum_{j=-r}^{r} b_{j,k}w_{j-i}\cdots w_{j-1} e_{j-i},0 \right)\ge \varepsilon.$$
		\end{itemize}
	\end{theorem}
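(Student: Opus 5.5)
We prove the two implications separately, using Lemma \ref{L1} to replace mean Li--Yorke chaos by the existence of an absolutely mean semi-irregular vector. Lemmas \ref{Le0} and \ref{L3} will supply the sensitivity part, and Lemma \ref{lemex} lets us pass from the index $0$ to any index $i$.

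\emph{Necessity.} Let $x=(x_j)_{j\in\Z}$ be an absolutely mean semi-irregular vector for $B_w$. Then
\[
\liminf_{n}\frac1n\sum_{k=1}^n d(B_w^k x,0)=0 .
\]
Pick $j$ with $x_j\neq 0$. Condition (C) gives, for every seminorm,
\[
\|x_j w_{j-k}\cdots w_{j-1}e_{j-k}\|_n\le \|B_w^k x\|_n .
\]
Since $d$ is monotone in the seminorms, it follows that $d(x_jw_{j-k}\cdots w_{j-1}e_{j-k},0)\le d(B_w^kx,0)$. Applying (P2) with $\lambda=1/x_j$, we get $d(w_{j-k}\cdots w_{j-1}e_{j-k},0)\le (1+|x_j|^{-1})\,d(B_w^kx,0)$. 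Hence the liminf of the averages for index $j$ vanishes, and Lemma \ref{lemex} transfers this to $j=0$, which is (A).

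For (B), Lemma \ref{L1} shows that $B_w$ is mean Li--Yorke sensitive, hence mean sensitive. Lemma \ref{Le0} then gives $y_k\to 0$, an increasing $(N_k)$ and $\varepsilon_0>0$ with $\frac1{N_k}\sum_{i\le N_k}d(B_w^iy_k,0)\ge \varepsilon_0$. Passing to a subsequence, we may assume $d(y_k,0)<1/(2k)$. Now truncate $y_k$ to $\sum_{|j|\le r}y_{k,j}e_j$, which is possible because $(e_n)$ is a basis. The finitely many operators $B_w^i$, $i\le N_k$, are continuous, so choosing $r$ large makes the truncation error $d(B_w^i(y_k-\text{trunc}),0)<\varepsilon_0/2$ for every $i\le N_k$, and also $d(\text{trunc},0)<1/k$. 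By (P1) the averages of the truncation are then at least $\varepsilon_0/2=:\varepsilon$. Since $B_w^i e_j=w_{j-i}\cdots w_{j-1}e_{j-i}$, this is exactly (B).

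\emph{Sufficiency.} By (A) and Lemma \ref{lemex}, every $e_i$ lies in $\mathrm{MProx}(B_w,0)$. We want the mean proximal cell to be dense, so we show it contains $c_{00}(\Z)$. This is the main obstacle, since $\mathrm{MProx}$ is not obviously a linear subspace: the liminfs for different $e_i$ may be attained along different subsequences. We handle it by re-running the index-shifting computation in the proof of Lemma \ref{lemex}. Starting from a single sequence $(n_k)$ along which the average for $e_0$ tends to $0$, that computation produces, for each $i$, the sequence $n_k\pm|i|$. These differ from $n_k$ by a bounded amount, so the averages for $e_i$ along the same $n_k$ also tend to $0$; indeed, shifting a window by a bounded number of terms changes the average by $O(|i|/n_k)$, since $d\le 1$. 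With a common subsequence, (P1) and (P2) give
\[
\frac1{n_k}\sum_{t=1}^{n_k} d\Bigl(B_w^t\sum_{|j|\le r} c_je_j,0\Bigr)\longrightarrow 0 ,
\]
so $c_{00}(\Z)\subset \mathrm{MProx}(B_w,0)$, and $c_{00}(\Z)$ is dense because $(e_n)$ is a basis.

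Next, (B) gives vectors $z_k=\sum_{j=-r}^r b_{j,k}e_j\to 0$ whose $N_k$-averages are at least $\varepsilon$. We claim some $x$ satisfies $\limsup_n \frac1n\sum_{t\le n}d(B_w^tx,0)>0$. Suppose not; then $\mathrm{MAsym}(B_w,0)=X$. Intersecting over $n$ the sets where the averages are eventually below $\varepsilon/2$, and applying the Baire category theorem together with a standard equicontinuity-of-averages argument, contradicts the existence of $z_k\to0$ with averages at least $\varepsilon$. This is the content of Lemma \ref{Le0}(b) combined with \cite{Chi}; alternatively, Remark \ref{remark1} gives the same conclusion. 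Thus Lemma \ref{L3}(b) holds, so $B_w$ has a dense set of absolutely mean semi-irregular vectors, and Lemma \ref{L1} yields mean Li--Yorke chaos.
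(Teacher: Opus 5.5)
Your proof is correct and follows the paper's skeleton.

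\textbf{What matches the paper.} For necessity you use Lemma~\ref{L1}, condition (C) and Lemma~\ref{lemex} to get (A); your explicit use of (P2) to divide out $x_j$ fills a step the paper leaves implicit. You then use mean sensitivity, Lemma~\ref{Le0} and truncation to get (B). For sufficiency you show that $\mathrm{MProx}(B_w,0)$ is dense and that some vector has positive upper mean, and conclude with Lemmas~\ref{L3} and~\ref{L1}.

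\textbf{Where you genuinely differ: the inclusion $c_{00}(\Z)\subset\mathrm{MProx}(B_w,0)$.} The paper sidesteps the non-linearity of $\mathrm{MProx}$ with a case split. Either some $e_{j_0}$ has positive upper mean, and then it is already a semi-irregular vector. Or every $e_j$ lies in $\mathrm{MAsym}(B_w,0)$, which is a linear subspace by (P1)--(P2). You instead reopen the computation in Lemma~\ref{lemex}. You note that the shifted sequences $n_k\pm|i|$ may be replaced by $(n_k)$ itself, because the terms are nonnegative and bounded by $1$. This gives one subsequence that works for all $e_i$ at once, hence the inclusion holds unconditionally and no dichotomy is needed. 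The paper's split is shorter and uses Lemma~\ref{lemex} as a black box; your version yields a slightly stronger intermediate fact.

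\textbf{The vector of positive upper mean.} The paper's one-line route is worth adopting. (B) and Lemma~\ref{Le0} give mean sensitivity. Applying the definition of mean sensitivity at the point $x=0$ then directly yields $y$ with $\limsup_n\frac1n\sum_{k=1}^n d(B_w^ky,0)\ge\delta>0$.

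Your Baire-category contradiction is also valid. The sets where all averages beyond some index are at most $\varepsilon/4$ are closed and cover $X$, so one has interior. Translation invariance of $d$ then pushes the bound $\varepsilon/2$ to a neighbourhood of $0$, which contradicts $z_k\to 0$ with $N_k\to\infty$. However, this only re-proves the needed implication of Lemma~\ref{Le0}.

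Your aside that Remark~\ref{remark1} gives the same conclusion is unfounded. That remark says a residual $\mathrm{MAsym}$ equals $X$, which does not contradict $\mathrm{MAsym}(B_w,0)=X$. The aside is not load-bearing, so the proof stands without it.
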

	\begin{proof}
		($\Rightarrow$) Suppose that $B_w$ is mean Li--Yorke chaotic. Therefore, by Lemma \ref{L1}, $B_w$ admits an absolutely mean semi-irregular vector $y:=(y_j)_{j \in \Z}$, then
		\begin{equation}\label{eq4}
			\liminf_{n\to \infty}\frac{1}{n}\sum_{k=1}^{n}d((B_w)^k(y),0)=0\quad \text{and}\quad \limsup_{n\to \infty}\frac{1}{n}\sum_{k=1}^{n}d((B_w)^k(y),0)>0.
		\end{equation}
		Take $j_0 \in \Z$ such that $y_{j_0}\ne 0$. Then, by condition (C) and (\ref{eq4}), we have 
		$$\liminf_{n\to \infty}\frac{1}{n}\sum_{k=1}^{n}d(y_{j_0}w_{j_0-k}\cdots w_{j_0-1}e_{j_0-k},0)\le\liminf_{n\to \infty}\frac{1}{n}\sum_{k=1}^{n}d((B_w)^k(y),0)= 0.$$
		Therefore, by Lemma \ref{lemex}, condition (A) holds. Since $B_w$ is mean Li--Yorke chaotic, then, by Lemma \ref{L1}, $B_w$ is mean Li-Yorke sensitive and, in particular, it is mean sensitive. Thus, by Lemma \ref{Le0}, there exist $\varepsilon\in (0,1)$, a sequence $(z_k)_k$ in $X$ and an increasing sequence $(N_k)_k$ in $\mathbb{N}$ such that $\lim_{k\to\infty} z_k=0$ and
		$$\frac{1}{N_k}\sum_{i=1}^{N_k} d((B_w )^i(z_k),0)>\varepsilon, \quad \text{for all }k\in\N.$$
		Take an increasing sequence $(k_s)_{s \in \N}$ of positive integers such that $d(z_{k_s},0)< \frac{1}{2s}$. Define $x_s:=z_{k_s}$ and $M_s:=N_{k_s}$ for each $s \in \N$. Then
		\begin{equation}\label{bib1}
			\frac{1}{M_s}\sum_{i=1}^{M_s} d((B_w)^{i}(x_s),0)>\varepsilon, \quad \text{for all }s\in\N.
		\end{equation}
		Fix $s \in \N$ and write $x_s=(x_{s,n})_{n\in \Z}$. Since $\lim_{n\to \infty } \sum_{j=-n}^{n}x_{s,j}e_j=x_s$, then, by the continuity of $B_w$, there exists $r\in \N$ such that 
		\begin{equation}\label{bib2}
			d\left( (B_w)^i\left( \sum_{j=-r}^{r}x_{s,j}e_j \right),(B_w)^i(x_s) \right)< \frac{\varepsilon}{2s}, \quad \text{for all }i \in \left\{ 0,1,\cdots , M_s \right\}.
		\end{equation}
		Therefore, by (\ref{bib1}) and (\ref{bib2}), and using the triangle inequality of $d$ we obtain 
		$$d\left( \sum_{j=-r}^{r}x_{s,j}e_j,0 \right)<\frac{1}{s}\quad \text{and}\quad \frac{1}{M_s}\sum_{i=1}^{M_s} d\left( \sum_{j=-r}^{r} b_{j,k}w_{j-i}\cdots w_{j-1} e_{j-i},0 \right)>\frac{\varepsilon}{2}.$$
		($\Leftarrow$) By condition (A) and Lemma \ref{lemex}, we have that 
		$$\liminf_{n\to \infty}\frac{1}{n}\sum_{k=1}^{n}d((B_w)^k(e_j),0)=0, \quad \text{for all }j \in \Z.$$
		If there exists $j_0 \in \Z$ such that $\limsup_{n\to \infty}\frac{1}{n}\sum_{k=1}^{n}d((B_w)^k(e_{j_0}),0)>0$, then $B_w$ is mean Li--Yorke chaotic, given that $e_{j_0}$ is an absolutely mean semi-irregular vector. Otherwise, we have
		$$\lim_{n\to \infty}\frac{1}{n}\sum_{k=1}^{n}d((B_w)^k(e_j),0)=0, \quad \text{for all }j \in \Z.$$
		Therefore, using properties (P1) and (P2) of the metric $d$, we get $c_{00}(\Z)\subset \text{MProx}(B_w,0)$. Therefore, $\text{MProx}(B_w,0)$ is dense. By condition (B) and Lemma \ref{Le0}, we have that $B_w$ is mean sensitive. Then, there exists $x\in X$ such that
		$$\limsup_{n\to \infty}\frac{1}{n}\sum_{k=1}^{n}d((B_w)^k(x),0)>0.$$
		Thus, by Lemma \ref{L3}, $B_w$ admits a dense set of absolutely mean semi-irregular vectors. Therefore, by Lemma \ref{L1}, we conclude that $B_w$ is mean Li--Yorke chaotic.
	\end{proof}
	\begin{remark}
		The condition (A) in the last theorem was used in the part $(\Leftarrow)$ of the proof to ensure that $\text{MProx}(B_w,0)$ is dense. Since in the unilateral case
		$$\lim_{n \to \infty} (B_w)^n(e_i)=0, \quad \text{for all } i \in \N,$$
		then it follows immediately that $\text{MProx}(B_w,0)$ is dense. Therefore, in this setting, we obtain the following characterization: 
	\end{remark}
	\begin{theorem}\label{theo3}
		Let $X$ be a Fréchet sequence space, endowed with the compatible metric $d$. Suppose that the sequence 
		$(e_n)_{n \in \mathbb{N}}$ of canonical vectors is a basis. Suppose that the unilateral weighted backward shift $B_w$, with nonzero weights $w := (w_n)_{n \in \mathbb{N}}$, is well-defined and continuous on $X$. Then $B_w$ is mean Li--Yorke chaotic if and only if the following condition holds:
		\begin{itemize}
			\item There exist $\varepsilon>0$ and an increasing sequence $(N_k)_{k \in \N}$ of positive integers, such that for each $k\in \N$ there are $r:= r(k) \in \N$ and scalars $b_{1,k},\cdots , b_{r,k} \in \K$ with $d \left( \sum_{j=1}^{r} b_{j,k}e_{j},0 \right)<\frac{1}{k}$ and
			$$\frac{1}{N_k}\sum_{i=1}^{N_k}d \left( \sum_{j=1}^{r} b_{j,k}w_{j-i}\cdots w_{j-1} e_{j-i},0 \right)\ge \varepsilon,$$
			where we consider $e_{k}= (0)_{j \in \N}$ and $w_k=0$, for $k<1$.
		\end{itemize}
	\end{theorem}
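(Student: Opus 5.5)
The plan is to mirror the proof of Theorem \ref{T2}, replacing the role of condition (A) by the fact that in the unilateral case $(B_w)^n e_i = 0$ for all $n \ge i$. Note that condition (C) is not assumed here; in Theorem \ref{T2} it was only used to derive (A), which is no longer needed.

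For ($\Rightarrow$), assume $B_w$ is mean Li--Yorke chaotic. By Lemma \ref{L1}, $B_w$ is mean Li--Yorke sensitive, hence mean sensitive. Lemma \ref{Le0} then yields $\varepsilon\in(0,1)$, vectors $z_k\to 0$ and an increasing sequence $(N_k)$ with $\frac{1}{N_k}\sum_{i=1}^{N_k} d((B_w)^i z_k,0)>\varepsilon$. Passing to a subsequence $x_s:=z_{k_s}$ with $d(x_s,0)<\frac{1}{2s}$ and $M_s:=N_{k_s}$, fix $s$. Since $(e_n)$ is a basis, the partial sums $\sum_{j=1}^r x_{s,j}e_j$ converge to $x_s$. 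The finitely many maps $(B_w)^i$, $0\le i\le M_s$, are continuous, so we may choose $r$ large enough that $d\bigl((B_w)^i(\sum_{j\le r}x_{s,j}e_j),(B_w)^i x_s\bigr)<\varepsilon/2$ for all such $i$, and also $d(\sum_{j\le r} x_{s,j}e_j,x_s)<\frac{1}{2s}$. The triangle inequality then gives $d(\sum_{j\le r}x_{s,j}e_j,0)<1/s$ and an average exceeding $\varepsilon/2$. It remains to identify $(B_w)^i(\sum_{j=1}^r b_j e_j)=\sum_{j=1}^r b_j w_{j-i}\cdots w_{j-1}e_{j-i}$, using the convention $e_k=0$, $w_k=0$ for $k<1$. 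Taking $b_{j,s}:=x_{s,j}$ gives the condition with $\varepsilon/2$.

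For ($\Leftarrow$), every $x\in c_{00}(\N)$ satisfies $(B_w)^n x=0$ for $n$ large, so $c_{00}(\N)\subset \text{MAsym}(B_w,0)\subset\text{MProx}(B_w,0)$. Since $(e_n)$ is a basis, $c_{00}(\N)$ is dense, and therefore so is $\text{MProx}(B_w,0)$. The condition, rewritten via the identity above, is exactly statement (b) of Lemma \ref{Le0} with $y_k:=\sum_{j=1}^{r}b_{j,k}e_j\to 0$. Hence $B_w$ is mean sensitive, and in particular some $x$ satisfies $\limsup_n \frac1n\sum_{i=1}^n d((B_w)^i x,0)>0$. Indeed, taking $x=0$ in the definition of mean sensitivity gives a $y$ with $\limsup\ge\delta$. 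Lemma \ref{L3} then provides a dense set of absolutely mean semi-irregular vectors, in particular at least one, and Lemma \ref{L1} gives mean Li--Yorke chaos.

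The only delicate point is the approximation step in ($\Rightarrow$). There, uniform closeness over the finite range $0\le i\le M_s$ must be obtained from continuity of the finitely many iterates together with the basis property, which is routine. The convention $e_k=0$, $w_k=0$ for $k<1$ simply encodes that coordinates shifted off the left end vanish.
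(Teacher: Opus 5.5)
Your proposal is correct and takes essentially the same route as the paper. The paper obtains Theorem~\ref{theo3} by rerunning the proof of Theorem~\ref{T2} with condition (A) dropped: for ($\Rightarrow$), mean sensitivity via Lemmas~\ref{L1} and \ref{Le0} followed by truncation to $c_{00}(\N)$; for ($\Leftarrow$), density of $\text{MProx}(B_w,0)\supset c_{00}(\N)$ together with Lemmas~\ref{Le0}, \ref{L3} and \ref{L1}. That matches your argument, including your remark that condition (C) was only needed to derive (A) and is therefore unnecessary here.
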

	As in the case of Li–Yorke chaos (see \cite[Corollary 20]{Nilson2}), we present below propositions that reveal a dichotomy of weighted shifts with respect to mean Li–Yorke chaos.
	\begin{proposition}
		Let $X$ be a Fréchet sequence space over $\mathbb{Z}$, endowed with the compatible metric $d$. Suppose that the sequence 
		$(e_n)_{n \in \mathbb{Z}}$ of canonical vectors is a basis. Suppose that the bilateral weighted backward shift $B_w$, with nonzero weights $w := (w_n)_{n \in \mathbb{Z}}$, is well-defined and continuous on $X$. If $\liminf_{n\to \infty}\frac{1}{n}\sum_{k=1}^{n}d(w_{-k}\cdots w_{-1}e_{-k},0)=0$, then either 
		\begin{itemize}
			\item[\rm (a)] $B_w$ is mean Li--Yorke chaotic, or
			\item[\rm (b)] $\lim_{n\to \infty}\frac{1}{n}\sum_{k=1}^{n}d((B_w)^k(x),0)=0$, for all $x\in X$.
		\end{itemize}
	\end{proposition}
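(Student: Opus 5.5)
Assume (b) fails, so there is $x\in X$ with $\limsup_{n\to\infty}\frac1n\sum_{k=1}^n d((B_w)^k(x),0)>0$. We show that $B_w$ is then mean Li--Yorke chaotic, by the same route as the ($\Leftarrow$) direction of Theorem \ref{T2}. Condition (C) is not used; it only entered the necessity part of Theorem \ref{T2}.

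\emph{Step 1 (MProx is dense).} By hypothesis, statement (a) of Lemma \ref{lemex} holds with $j=0$. Hence for every $i\in\Z$,
$$\liminf_{n\to\infty}\frac1n\sum_{k=1}^n d((B_w)^k(e_i),0)=0,$$
since $(B_w)^k(e_i)=w_{i-k}\cdots w_{i-1}e_{i-k}$. We split into two cases.
\begin{itemize}
\item[(i)] For some $i_0$, $\limsup_{n\to\infty}\frac1n\sum_{k=1}^n d((B_w)^k(e_{i_0}),0)>0$. Then $e_{i_0}$ is an absolutely mean semi-irregular vector, and Lemma \ref{L1} shows that $B_w$ is mean Li--Yorke chaotic.
\item[(ii)] For every $i$, $\lim_{n\to\infty}\frac1n\sum_{k=1}^n d((B_w)^k(e_i),0)=0$. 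Take a finite combination $y=\sum_{|i|\le r}c_ie_i$. By linearity, (P1) and (P2),
$$\frac1n\sum_{k=1}^n d((B_w)^ky,0)\le\sum_{|i|\le r}(1+|c_i|)\,\frac1n\sum_{k=1}^n d((B_w)^ke_i,0)\longrightarrow0.$$
So $c_{00}(\Z)\subset\text{MAsym}(B_w,0)\subset\text{MProx}(B_w,0)$. Since $(e_n)$ is a basis, $c_{00}(\Z)$ is dense, and therefore $\text{MProx}(B_w,0)$ is dense.
\end{itemize}

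\emph{Step 2 (conclusion in case (ii)).} The mean proximal cell of $0$ is dense, and the vector $x$ from the failure of (b) satisfies the $\limsup$ condition in Lemma \ref{L3}(b). By Lemma \ref{L3}, $B_w$ admits a dense set of absolutely mean semi-irregular vectors; in particular it admits one. Lemma \ref{L1} then gives that $B_w$ is mean Li--Yorke chaotic.

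So in every case, if (b) fails then (a) holds, which proves the dichotomy. The two alternatives are in fact mutually exclusive: under (b) there is no vector with positive $\limsup$, hence no semi-irregular vector, so by Lemma \ref{L1} $B_w$ is not mean Li--Yorke chaotic. The only nonroutine step is the density of $\text{MProx}(B_w,0)$ in Step 1. The liminf property alone is not closed under sums, which is exactly why we split into cases (i) and (ii) and, in case (ii), use full limits so that (P1) and (P2) apply.
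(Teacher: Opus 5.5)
Your proof is correct, but it closes the argument with a different tool from the paper's.

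The paper argues by contraposition. If $B_w$ is not mean Li--Yorke chaotic, Lemma \ref{L1} rules out semi-irregular vectors, so $\text{MProx}(B_w,0)=\text{MAsym}(B_w,0)$. Combined with Lemma \ref{lemex}, this puts every $e_i$ directly into $\text{MAsym}(B_w,0)$, so no case split is needed. Then (P1) and (P2) give $c_{00}(\Z)\subset\text{MAsym}(B_w,0)$. The paper finishes with the Baire-category facts of Remark \ref{remark1}: $\text{MProx}(B_w,0)$ is a dense $G_\delta$, hence residual, and a residual mean asymptotic cell is the whole space, which is exactly (b).

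You instead start from the failure of (b) and first dispose of the case where some $e_{i_0}$ is already semi-irregular. Otherwise you rerun the sufficiency half of Theorem \ref{T2}, with your witness $x$ replacing the vector obtained there from mean sensitivity. The stronger Lemma \ref{L3} then does the work that Remark \ref{remark1} does in the paper.

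The paper's route needs only the softer residuality facts and yields (b) directly. Yours matches the structure of Theorem \ref{T2} and, in your case (ii), gives the stronger conclusion that $B_w$ has a dense set of absolutely mean semi-irregular vectors. Your remark that the two alternatives are mutually exclusive is also correct.
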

	\begin{proof}
		Suppose that $B_w$ is not mean Li--Yorke chaotic. Then $\text{MProx}(T,0)=\text{MAsym}(T,0)$. Since $$\lim_{n\to \infty}\frac{1}{n}\sum_{k=1}^{n}d((B_w)^k(e_0),0)=\liminf_{n\to \infty}\frac{1}{n}\sum_{k=1}^{n}d(w_{-k}\cdots w_{-1}e_{-k},0)=0,$$ by Lemma \ref{lemex} and proprieties (P1) and (P2) of the compatible metric $d$, we get $c_{00}(\Z)\subset \text{MProx}(T,0)$. Therefore, by Remark \ref{remark1}, we have that $\text{MProx}(T,0)=\text{MAsym}(T,0)$ is residual. Thus, again by Remark \ref{remark1}, $\text{MAsym}(T,0)=X$.
	\end{proof}
	\begin{proposition}
		Let $X$ be a Fréchet sequence space, endowed with the compatible metric $d$. Suppose that the sequence 
		$(e_n)_{n \in \mathbb{N}}$ of canonical vectors is a basis. Suppose that the unilateral weighted backward shift $B_w$, with nonzero weights $w := (w_n)_{n \in \mathbb{N}}$, is well-defined and continuous on $X$. Then either 
		\begin{itemize}
			\item[\rm (a)] $B_w$ admits a dense set of absolutely mean semi-irregular vectors, or
			\item[\rm (b)] $\lim_{n\to \infty}\frac{1}{n}\sum_{k=1}^{n}d((B_w)^k(x),0)=0$, for all $x\in X$.
		\end{itemize}
	\end{proposition}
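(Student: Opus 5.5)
The proof runs by cases on whether some orbit has positive upper mean. Suppose (b) fails, i.e.\ there exists $x\in X$ with
\[
\limsup_{n\to \infty}\frac{1}{n}\sum_{k=1}^{n}d((B_w)^k(x),0)>0
\]
(recall $\liminf\le\limsup$, and the limit being $0$ is equivalent to $\limsup=0$). We then derive (a) from Lemma \ref{L3}. Its second condition is exactly the existence of such an $x$, so the remaining task is to show that the mean proximal cell $\text{MProx}(B_w,0)$ is dense in $X$.

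First, in the unilateral case, for each $i\in\N$ we have $(B_w)^n(e_i)=w_{i-n}\cdots w_{i-1}e_{i-n}=0$ for all $n\ge i$, with the convention $e_k=0$ for $k<1$. Hence
\[
\frac{1}{n}\sum_{k=1}^{n}d((B_w)^k(e_i),0)\le \frac{i-1}{n}\longrightarrow 0 .
\]
More generally, for $y=\sum_{j=1}^{r}b_je_j\in c_{00}(\N)$ we get $(B_w)^k(y)=0$ for all $k\ge r$, so the Cesàro means of $d((B_w)^k(y),0)$ are at most $(r-1)/n\to 0$. This gives $c_{00}(\N)\subset \text{MAsym}(B_w,0)\subset\text{MProx}(B_w,0)$. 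Since $(e_n)_{n\in\N}$ is a basis, $c_{00}(\N)$ is dense in $X$, so $\text{MProx}(B_w,0)$ is dense. Lemma \ref{L3} then yields a dense set of absolutely mean semi-irregular vectors, which is (a).

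If instead the $\limsup$ vanishes for every $x$, we are in case (b). The two alternatives are thus exhaustive, and the statement follows.

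The only potential obstacle is the density of the mean proximal cell. Here it is trivial, because finitely supported vectors are eventually annihilated by the unilateral shift, so properties (P1) and (P2) are not even needed. Unlike in the bilateral proposition, no hypothesis like condition (A) is required.
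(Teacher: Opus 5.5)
Your proposal is correct and follows the paper's proof: assume (b) fails, observe that $c_{00}(\N)$ (dense, since $(e_n)_{n\in\N}$ is a basis) lies in $\text{MProx}(B_w,0)$ because finitely supported vectors are eventually annihilated by the unilateral shift, and conclude with Lemma \ref{L3}. You spell out details the paper leaves implicit, namely the bound $(r-1)/n$ on the Ces\`aro means and the density of $c_{00}(\N)$.
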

	\begin{proof}
		Suppose that item (b) is false. Then, there exists $x \in X$ such that $$\limsup_{n\to \infty}\frac{1}{n}\sum_{k=1}^{n}d((B_w)^k(x),0)>0.$$ Since $\lim_{n\to \infty}\frac{1}{n}\sum_{k=1}^{n}d((B_w)^k(y),0)=0$, for all $y \in c_{00}(\N)$, then, by Lemma \ref{L3}, item (a) holds. 
	\end{proof}
	By using Theorem \ref{T2}, we obtain characterizations of mean Li-Yorke chaos in the context of weighted backward shifts on the Köthe sequence spaces $\lambda_p(A,\mathbb{Z})$ with $p \in [1,\infty) \cup \{0\}$, since these spaces satisfy condition (C). Furthermore, by using Theorem \ref{theo3}, we obtain the corresponding unilateral characterizations in this setting. Next, we will obtain results involving the increasing sequence of seminorms $(\|\cdot\|_n)_{n \in \mathbb{N}}$ that induce the topology of $X$. To do this, we need the following definition and lemma: 
	\begin{definition}
		Let $X$ be a Fréchet space endowed with an increasing sequence
		$(\|\cdot\|_k)_{k\in\mathbb{N}}$ of seminorms and with the compatible metric $d$. A vector $x\in X$ is called \textit{absolutely mean} $m$-\textit{irregular} if 
		$$\liminf_{n\to \infty}\frac{1}{n}\sum_{k=1}^{n}d(T^kx,0)=0\quad \text{and}\quad \limsup_{n\to \infty}\frac{1}{n}\sum_{k=1}^{n}\left\| T^k x \right\|_m=\infty.$$ 
	\end{definition}
	\begin{lemma}\cite[Theorem 4.29]{Chi}\label{L0}
		Let $T:X \to X$ be a continuous linear operator on a Fréchet space $X$. Then the set of absolutely 
		mean semi-irregular vectors is contained in the closure of the set of 
		absolutely mean $m$-irregular vectors for some $m \in \mathbb{N}$.
	\end{lemma}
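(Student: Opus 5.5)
We begin by fixing $m$. Let $x$ be absolutely mean semi-irregular and put $\delta:=\limsup_{n}\frac1n\sum_{k=1}^n d(T^kx,0)>0$. Since $d(y,0)\le \min\{1,\|y\|_m\}+2^{-m}$ for an increasing sequence of seminorms, we choose $m$ with $2^{-m}<\delta/2$. Then $\limsup_n \frac1n\sum_{k=1}^n\min\{1,\|T^kx\|_m\}\ge \delta/2$, and in particular $\limsup_n\frac1n\sum_{k=1}^n\|T^kx\|_m>0$. This $m$ (possibly depending on $T$ and $x$) is the one we work with. It then suffices to show that every $d$-ball $B(x,\eta)$ contains an absolutely mean $m$-irregular vector.

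The plan is a Baire category argument in the closed set $Z:=\overline{\operatorname{MProx}(T,0)}$, which contains $x$ and is a complete metric space. By Remark~\ref{remark1}, $\operatorname{MProx}(T,0)$ is a $G_\delta$ set, hence a dense $G_\delta$ in $Z$. Next, the set
\[
R_m:=\Bigl\{y\in X:\ \limsup_{n\to\infty}\tfrac1n\textstyle\sum_{k=1}^n\|T^ky\|_m=\infty\Bigr\}=\bigcap_{j,N\in\N}U_{N,j}
\]
is a $G_\delta$ set, where $U_{N,j}:=\{y:\exists\, n\ge N,\ \frac1n\sum_{k=1}^n\|T^ky\|_m>j\}$. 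Each $U_{N,j}$ is open, since every finite average is a continuous function of $y$. If every $U_{N,j}\cap Z$ is dense in $Z$ (at least near $x$), then $\operatorname{MProx}(T,0)\cap R_m$ is residual in $Z\cap B(x,\eta)$, hence nonempty. Its elements are exactly the absolutely mean $m$-irregular vectors, which gives the lemma.

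The key step, and the expected main obstacle, is the density of $U_{N,j}\cap Z$ in $Z$. The tool is a scaling trick combined with mean sensitivity. By Lemma~\ref{L1}, $T$ is mean Li--Yorke sensitive, hence mean sensitive. So Lemma~\ref{Le0} yields $u_s\to0$ and $N_s\uparrow\infty$ with $\frac1{N_s}\sum_{i=1}^{N_s}d(T^iu_s,0)\ge c>0$, and therefore $\frac1{N_s}\sum_{i\le N_s}\|T^iu_s\|_{m}\ge c/2$ after enlarging $m$ as above. For $\lambda>0$, property (P2) gives $d(\lambda u_s,0)\le(1+\lambda)d(u_s,0)$, while the $m$-averages of $\lambda u_s$ scale by $\lambda$. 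Choosing $\lambda$ large and then $s$ so large that $(1+\lambda)d(u_s,0)$ is tiny, we can add to any $z\in\operatorname{MProx}(T,0)$ a perturbation $\lambda u_s$ that is $d$-small but forces the average at time $N_s$ above $j+\frac1{N_s}\sum_{i\le N_s}\|T^iz\|_m$. The genuine difficulty is to keep $z+\lambda u_s$ inside $Z$, since $\operatorname{MProx}(T,0)$ is not a linear space. I would first try to take the $u_s$ themselves from the semi-irregular (hence $\operatorname{MProx}$) vectors produced by mean Li--Yorke sensitivity. If that fails, the fallback is a direct inductive construction $y=x+\sum_l\lambda_lu_l$ with rapidly decreasing $d(\lambda_lu_l,0)$. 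At each step $l$ we would pick a time $M_l$ where the averages of $d(T^i\,\cdot\,,0)$ of the current finite sum are small, and then choose $u_{l+1}$ so small that its contribution up to time $M_l$ is negligible, while its $m$-average at a later time $N_{s_{l+1}}$ exceeds $l$. This diagonal bookkeeping secures both the liminf condition and the unbounded limsup in the limit vector.
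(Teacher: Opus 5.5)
The paper gives no proof of this lemma; it is imported from the cited work, so I am judging your argument on its own. Your choice of $m$ via $d(y,0)\le\min\{1,\|y\|_m\}+2^{-m}$ and the Baire-category reduction are fine. The gap is the step you flag as the main obstacle: you never show that $U_{N,j}\cap Z$ is dense in $Z$ near $x$. The set $Z=\overline{\operatorname{MProx}(T,0)}$ is not a linear space, so nothing guarantees $z+\lambda u_s\in Z$, and neither remedy fixes this.

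Taking $u_s$ semi-irregular does not help, because $\operatorname{MProx}(T,0)$ is not closed under addition. The vectors $z$ and $u_s$ can have small Ces\`aro averages along unrelated sequences of times, for instance $z=(a,0)$ and $u_s=(0,b)$ for a direct sum $T_1\oplus T_2$ whose factors are mean proximal at incompatible times. Then $z+u_s$ has averages bounded below. The inductive construction $x+\sum_l\lambda_lu_l$ has the same defect: to choose the times $M_l$ you need every partial sum to be mean proximal, which is exactly the unproved claim. A smaller point: since $\lambda$ is fixed before $s$, you also need $\sup_s\frac1{N_s}\sum_{i\le N_s}\|T^iz\|_m<\infty$. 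If this fails, then $z\in U_{N,j}$ already, so it is harmless.

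The missing idea is to work in the closed $T$-invariant \emph{linear} subspace $Y:=\overline{\operatorname{span}}\{T^nx:n\ge0\}$ instead of $Z$. Suppose $\frac1{n_l}\sum_{k=1}^{n_l}d(T^kx,0)\to0$. Since $d\le1$,
\[
\frac1{n_l}\sum_{k=1}^{n_l}d(T^{k+i}x,0)\le\frac1{n_l}\sum_{k=1}^{n_l}d(T^kx,0)+\frac{i}{n_l}\longrightarrow0 .
\]
By (P1)--(P2), every vector of $\operatorname{span}\{T^nx\}$ is then mean proximal along the same $(n_l)$. Hence $\operatorname{MProx}(T,0)\cap Y$ is a dense $G_\delta$ in $Y$, and perturbations inside $Y$ never leave it.

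It remains to show that $R_m\cap Y$ is residual in $Y$. One option is your scaling trick with $u_s\in Y$, obtained from Lemmas~\ref{L1} and~\ref{Le0} applied to $T|_Y$, for which $x$ is semi-irregular. A more direct option is Banach--Steinhaus:
\begin{enumerate}
\item Put $p_n(y):=\frac1n\sum_{k=1}^n\|T^ky\|_m$. If $R_m\cap Y$ were not residual, some closed absolutely convex set $\{y\in Y:\sup_np_n(y)\le j\}$ would have interior. This gives $p_n\le C\|\cdot\|_{m'}$ on $Y$ for all $n$.
\item Since $\liminf_kd(T^kx,0)=0$, pick $k_0$ with $\|T^{k_0}x\|_{m'}<\epsilon$. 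Then $p_n(x)\le\frac1n\sum_{k\le k_0}\|T^kx\|_m+C\epsilon$.
\item So $\limsup_np_n(x)\le C\epsilon$ for every $\epsilon$, contradicting $\limsup_np_n(x)\ge\delta/2$.
\end{enumerate}
Baire's theorem in $Y$ then makes the absolutely mean $m$-irregular vectors a dense $G_\delta$ of $Y$, and $x\in Y$. Your $m$ depends on $x$, which is consistent with how the lemma is used in Proposition~\ref{prop}.
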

	\begin{proposition}\label{prop}
		Let $X$ be a Fréchet sequence space over $\mathbb{Z}$, endowed with an increasing sequence $(\left\| \cdot \right\|_n)_{n \in \N}$ of seminorms and with the compatible metric $d$. Suppose that the sequence 
		$(e_n)_{n \in \mathbb{Z}}$ of canonical vectors is a basis and that condition (C) holds. Suppose that the bilateral weighted backward shift $B_w$, with nonzero weights $w := (w_n)_{n \in \mathbb{Z}}$, is well-defined and continuous on $X$. If $B_w$ is mean Li--Yorke chaotic, then the following conditions hold:
		\begin{itemize}
			\item[\rm (A)] $\liminf_{n\to \infty}\frac{1}{n}\sum_{k=1}^{n}d(w_{-k}\cdots w_{-1}e_{-k},0)=0;$
			\item[\rm (B)]  there exist $m \in \N$ and an increasing sequence $(N_k)_{k \in \N}$ of positive integers, such that for each $k\in \N$ there are $r:= r(k) \in \N$ and scalars $b_{-r,k},\cdots , b_{r,k} \in \K$ with $\left\|  \sum_{j=-r}^{r} b_{j,k}e_{j} \right\|_{p(k)} > 0$ and
			$$\frac{1}{N_k\left\|  \sum_{j=-r}^{r} b_{j,k}e_{j} \right\|_{p(k)}}\sum_{i=1}^{N_k}\left\| \sum_{j=-r}^{r} b_{j,k}w_{j-i}\cdots w_{j-1}e_{j-i} \right\|_m\ge k,$$
			where $p(k):= m$, if $1\le k\le m$, and $p(k):= k$, if $k>m$.
		\end{itemize}        
	\end{proposition}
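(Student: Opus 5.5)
Condition (A) is obtained exactly as in the proof of Theorem \ref{T2}. By Lemma \ref{L1}, $B_w$ admits an absolutely mean semi-irregular vector $y$. Choose $j_0$ with $y_{j_0}\neq 0$. Condition (C) gives $\|y_{j_0} w_{j_0-k}\cdots w_{j_0-1}e_{j_0-k}\|_n\le \|(B_w)^k y\|_n$ for every $n$, and since $d$ is monotone in the seminorms, $d(y_{j_0}(B_w)^k e_{j_0},0)\le d((B_w)^k y,0)$. Hence the Cesàro liminf along the orbit of $e_{j_0}$ vanishes. Dividing out the scalar $y_{j_0}$ via (P2), and then applying Lemma \ref{lemex} to pass from $j_0$ to $0$, yields (A).

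For (B), I will use Lemma \ref{L0} instead of Lemma \ref{Le0}. Since the set of absolutely mean semi-irregular vectors is nonempty, Lemma \ref{L0} gives $m\in\N$ and an absolutely mean $m$-irregular vector $x=(x_j)_{j\in\Z}$. Then
\[
\limsup_{n\to\infty}\frac1n\sum_{i=1}^n\|(B_w)^i x\|_m=\infty .
\]
Enlarging $m$ if needed (the seminorms increase, so the limsup stays infinite), I may assume $\|x\|_m>0$, and therefore $\|x\|_{p(k)}>0$ for all $k$. Fix $k$. Choose $N_k>N_{k-1}$ such that
\[
\frac1{N_k}\sum_{i=1}^{N_k}\|(B_w)^i x\|_m> k\bigl(\|x\|_{p(k)}+1\bigr),
\]
which is possible because the limsup is infinite. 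This makes $(N_k)$ increasing.

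Next I truncate. Put $x^{(r)}:=\sum_{j=-r}^r x_je_j\to x$. By continuity of $B_w$, each of the finitely many maps $z\mapsto\|(B_w)^i z\|_m$, $1\le i\le N_k$, is continuous, and so is $z\mapsto \|z\|_{p(k)}$. Hence for $r$ large,
\[
\frac1{N_k}\sum_{i=1}^{N_k}\|(B_w)^i x^{(r)}\|_m> k\bigl(\|x\|_{p(k)}+1\bigr)\quad\text{and}\quad \|x^{(r)}\|_{p(k)}<\|x\|_{p(k)}+1,
\]
and also $\|x^{(r)}\|_{p(k)}>0$. Setting $b_{j,k}:=x_j$ and using $(B_w)^i x^{(r)}=\sum_{j=-r}^r x_j w_{j-i}\cdots w_{j-1}e_{j-i}$, the quotient in (B) exceeds $k$, as required. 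Zero coefficients are allowed in (B), so there is no need to remove indices.

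The only delicate step is obtaining an $m$-irregular vector with an unbounded seminorm average rather than merely a positive $d$-average. This is precisely what Lemma \ref{L0} supplies, and since that lemma is quoted from \cite{Chi}, the argument reduces to the routine truncation above. Condition (C) is used only in the proof of (A).
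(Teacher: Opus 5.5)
Your proposal is correct and follows essentially the same route as the paper: (A) via condition (C) and Lemma \ref{lemex} as in Theorem \ref{T2}, and (B) via Lemma \ref{L0} to get an $m$-irregular vector, followed by truncating $x$ using the basis property and the continuity of $B_w$. You also make explicit two points the paper passes over. You remove the scalar $y_{j_0}$ using (P2), and you bound $\|x^{(r)}\|_{p(k)}<\|x\|_{p(k)}+1$ directly instead of approximating the whole quotient to within $\tfrac12$.
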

	\begin{proof}
		The proof of item (A) is the same as the one given for item (A) in the proof of Theorem ~\ref{T2}, so we only prove item (B). Since $B_w$ is mean Li-Yorke chaotic, then, by Lemma \ref{L1}, $B_w$ admits an absolutely mean semi-irregular vector. Therefore, Lemma \ref{L0} ensures the existence of $m \in \N$ and $x=(x_j)_{j \in \Z} \in X$ such that
		\begin{equation}\label{on}
			\limsup_{n\to \infty}\frac{1}{n}\sum_{k=1}^{n}\left\| (B_w)^k (x) \right\|_m=\infty.
		\end{equation}
		It is easy to see that we can take $m\in \N$ in (\ref{on}) sufficiently large such that $\left\| x \right\|_m \neq 0$. By (\ref{on}), there exists an increasing sequence $(N_k)_{k \in \N}$ of positive integers such that for each $k \in \N$
		\begin{equation}\label{eq6.}
			\frac{1}{N_k\left\| x \right\|_{p(k)}}\sum_{i=1}^{N_k}\left\| (B_w)^i (x) \right\|_m> k +\frac{1}{2}.
		\end{equation}
		Since $\sum_{s=-j}^{j} x_s e_s \to x$ as $j\to\infty$ and $B_w$ is continuous, there exists $r \in \N$ such that 
		\begin{equation}\label{eq5}
			\left| \frac{\left\| (B_w)^i\left( \sum_{s=-r}^{r}x_se_s \right) \right\|_m}{\left\| \sum_{s=-r}^{r}x_se_s \right\|_{p(k)}}-\frac{\left\| (B_w)^i(x) \right\|_m}{\left\| x \right\|_{p(k)}} \right|< \frac{1}{2}, \quad \text{for all } i=1, \cdots , N_k.
		\end{equation}
		Thus, by (\ref{eq6.}) and (\ref{eq5}), we conclude condition (B).
	\end{proof}
	We say that $X$ is a Banach sequence space over $\Z$ if $(X,\|\cdot\|)$ is a Banach space which is a vector subspace of $\mathbb{K}^{\mathbb{Z}}$ and the inclusion map $X \to \mathbb{K}^{\mathbb{Z}}$ is continuous, i.e., convergence in $X$ implies coordinatewise convergence. Under the assumption that $X$ is a Banach sequence space over $\mathbb{Z}$, we can improve Proposition~\ref{prop}. To do so, we will need the following lemma: 
	\begin{lemma}\cite[Theorems 5 and 9]{BBP}\label{LL}
		Let $(X,\|\cdot\|)$ be a Banach space and $T:X\to X$ a continuous linear operator. Then, the following assertions are equivalent:
		\begin{itemize}
			\item[\rm (a)] $T$ is mean Li--Yorke chaotic;
			\item[\rm (b)] $T$ admits an \emph{absolutely mean-irregular vector} $x \in X$, that is, 
			$$\liminf_{N\to\infty}\frac{1}{N}\sum_{j=1}^{N}\|T^{j}x\|=0
			\quad \text{and} \quad
			\limsup_{N\to\infty}\frac{1}{N}\sum_{j=1}^{N}\|T^{j}x\|=\infty;$$
			\item[\rm (c)] $T$ satisfies the \emph{Mean Li--Yorke Chaos Criterion (MLYCC)}, that is, there exists a subset $X_0$ of $X$ with the following properties:
			\begin{enumerate}
				\item[\rm (i)] $\displaystyle \liminf_{N\to\infty}\frac{1}{N}\sum_{j=1}^{N}\|T^{j}x\|=0$, for every $x\in X_0$;
				
				\item[\rm (ii)] there are sequences $(y_k)$ in $\overline{\operatorname{span}(X_0)}$ and $(N_k)$ in $\mathbb{N}$ such that
				\[
				\frac{1}{N_k}\sum_{j=1}^{N_k}\|T^{j}y_k\|>k\|y_k\|,
				\quad \text{for every } k\in\mathbb{N}.
				\]
			\end{enumerate}
		\end{itemize}
	\end{lemma}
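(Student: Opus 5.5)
The plan is to prove the cycle (b)$\Rightarrow$(a)$\Rightarrow$(c)$\Rightarrow$(b). Throughout, the metric is the norm metric $d(x,y)=\|x-y\|$, and for $N\in\N$ we use the averaging operators $A_N y:=\frac1N\sum_{j=1}^N\|T^jy\|$, which are continuous sublinear functionals on $X$.

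\emph{Step 1: (b)$\Rightarrow$(a).} Let $x$ be absolutely mean-irregular and put $S:=\{\lambda x:\lambda\in[0,1]\}$, which is uncountable. For distinct $\lambda,\mu$ we have $A_N(\lambda x-\mu x)=|\lambda-\mu|A_N(x)$. Hence the $\liminf$ of these averages is $0$ and the $\limsup$ is $\infty>0$, so every pair of distinct points of $S$ is a mean Li--Yorke pair.

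\emph{Step 2: (c)$\Rightarrow$(b).} This is a Baire category argument on the closed subspace $M:=\overline{\operatorname{span}(X_0)}$.
\begin{itemize}
\item The set $\{y\in M:\liminf_N A_N y=0\}=\bigcap_{\ell}\bigcap_{N_0}\bigcup_{N\ge N_0}\{y\in M: A_Ny<1/\ell\}$ is a $G_\delta$ subset of $M$.
\item It contains $\operatorname{span}(X_0)$. To see this, take finitely many $x_1,\dots,x_q\in X_0$. The sublinearity bound $A_N(\sum c_ix_i)\le\sum|c_i|A_N(x_i)$ alone does not suffice, because the $\liminf$'s of the individual $x_i$ may be attained along different subsequences. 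I will handle this as in \cite{BBP}: first reduce to a suitable set $X_0$, or argue via density of such vectors directly. I expect this to be a technical point.
\item It is therefore residual in $M$.
\item Condition (ii) says that the sublinear maps $A_N|_M$ are not uniformly bounded. By the Banach--Steinhaus (resonance) theorem for countable families of continuous seminorms, the set $\{y\in M:\sup_N A_Ny=\infty\}$ is residual in $M$. Since each individual $A_Ny$ is finite, this set equals $\{y\in M:\limsup_NA_Ny=\infty\}$.
\end{itemize}
Intersecting the two residual sets gives an absolutely mean-irregular vector.

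\emph{Step 3: (a)$\Rightarrow$(c).} A mean Li--Yorke pair $(x,y)$ yields the vector $u:=x-y$ with $\liminf_NA_Nu=0$ and $\limsup_NA_Nu=\delta>0$. Take $X_0:=\{T^nu:n\ge0\}$.

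Condition (i) holds because a fixed shift $n$ changes the averages only by terms of order $n\|T\|^{n}\|u\|/N$. So $A_N(T^nu)\le\frac{N+n}{N}A_{N+n}(u)$, which tends to $0$ along the subsequence realizing $\liminf_NA_Nu=0$.

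For (ii), note that $\overline{\operatorname{span}(X_0)}$ is $T$-invariant and contains all $T^ju$. If (ii) failed, then by Banach--Steinhaus there would be $K$ with $A_Nz\le K\|z\|$ for all $z\in M$ and all $N$. I will show this forces $\delta=0$. Fix $\varepsilon>0$ and choose $N_i$ with $A_{N_i}u<\varepsilon$. Then some $j\le N_i$ satisfies $\|T^ju\|\le\varepsilon$. For every $N$,
\[
\sum_{t=1}^N\|T^tu\|\le\sum_{t=1}^{j}\|T^tu\|+\sum_{s=1}^{N}\|T^s(T^ju)\|\le C_j+KN\varepsilon .
\]
Letting $N\to\infty$ gives $\limsup_NA_Nu\le K\varepsilon$, and since $\varepsilon$ is arbitrary, $\limsup_NA_Nu=0$, a contradiction. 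So there are $y_k\in M$ and $N_k$ with $A_{N_k}y_k>k\|y_k\|$, which is (ii).

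\emph{Main obstacle.} The delicate point is the one flagged in Step 2: checking that the linear span of $X_0$ stays inside the mean proximal set (the set where $\liminf_NA_N=0$). For the specific orbit choice $X_0=\{T^nu\}$ made in Step 3 this is straightforward, since all vectors share the subsequence $(N_i)$ up to bounded shifts. To keep the cycle clean, I would therefore run Step 2 for the general $X_0$ following \cite{BBP}.
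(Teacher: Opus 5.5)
The part of your proof that handles an arbitrary $X_0$ has a genuine gap. The paper does not prove this lemma itself; it quotes it from \cite{BBP}.

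\textbf{What your argument does establish.} Steps 1 and 3 are sound. Step 2 is also sound when $X_0$ is the orbit $\{T^nu:n\ge 0\}$ of an absolutely mean semi-irregular vector. So you have actually proved (a)$\Leftrightarrow$(b) and (a)$\Rightarrow$(c).

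\textbf{The gap: (c)$\Rightarrow$(b) for an arbitrary $X_0$.} Your Baire argument needs the $G_\delta$ set $P:=\{y\in M:\liminf_N A_Ny=0\}$ to be dense in $M=\overline{\operatorname{span}(X_0)}$. You never prove this. Saying ``reduce to a suitable set $X_0$'' or ``follow \cite{BBP}'' is not an argument. As you note yourself, $\operatorname{span}(X_0)\subset P$ fails in general, because the individual $\liminf$'s can be attained along unrelated subsequences.

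\textbf{The missing idea: a case split.} This is the same device the paper uses in the ($\Leftarrow$) part of Theorem~\ref{T2}.
\begin{itemize}
\item Suppose some $x\in X_0$ has $\limsup_N A_Nx>0$. Then $x$ is absolutely mean semi-irregular. The segment $\{\lambda x:\lambda\in[0,1]\}$ already gives (a). To get (b), run your Step 3 with $u:=x$, then the orbit version of Step 2; this yields an absolutely mean-irregular vector in $\overline{\operatorname{span}\{T^nx:n\ge 0\}}$.
\item Otherwise every $x\in X_0$ satisfies $\lim_N A_Nx=0$. Each $A_N$ is a seminorm, so $\{v\in X:\lim_N A_Nv=0\}$ is a linear subspace and contains $\operatorname{span}(X_0)$. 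Hence $P$ is a dense $G_\delta$ in $M$. Intersecting it with your Banach--Steinhaus residual set finishes the proof.
\end{itemize}
With this case split inserted, your cycle closes.

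\textbf{Two smaller corrections in Step 3.}
\begin{itemize}
\item The justification ``terms of order $n\|T\|^n\|u\|/N$'' is wrong. The extra terms are $\|T^{N+i}u\|$ for $1\le i\le n$. They are controlled by $\|T\|^i\|T^Nu\|\le\|T\|^i N A_N(u)$, not by $\|u\|$. This gives $A_N(T^nu)\le\bigl(1+n\max\{1,\|T\|\}^n\bigr)A_N(u)$, which does vanish along the common subsequence $(N_i)$. Alternatively, for $n\le q$ use $A_N(T^nu)\le\frac{N+q}{N}A_{N+q}(u)$ at $N=N_i-q$, rather than evaluating $\frac{N+n}{N}A_{N+n}(u)$ at $N=N_i$.
\item No Banach--Steinhaus is needed to produce $K$. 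The negation of (ii) says directly that there is $k$ with $A_Nz\le k\|z\|$ for all $z\in M$ and all $N$.
\end{itemize}
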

	\begin{theorem}\label{ttt1}
		Let $(X,\|\cdot\|)$ be a Banach sequence space over $\Z$. Suppose that the sequence 
		$(e_n)_{n \in \mathbb{Z}}$ of canonical vectors is a basis and that condition (C) holds with the norm $\|\cdot\|$. Suppose that the bilateral weighted backward shift $B_w$, with nonzero weights $w := (w_n)_{n \in \mathbb{Z}}$, is well-defined and continuous on $X$. Then, $B_w$ is mean Li--Yorke chaotic if and only if the following conditions hold: 
		\begin{itemize}
			\item[\rm (A)] $\displaystyle \liminf_{n\to \infty}\frac{1}{n}\sum_{k=1}^{n}\|w_{-k}\cdots w_{-1}e_{-k}\|=0;$
			
			\item[\rm (B)] there exist $m \in \N$ and an increasing sequence $(N_k)_{k \in \N}$ of positive integers such that, for each $k\in \N$, there are $r:= r(k) \in \N$ and scalars $b_{-r,k},\cdots , b_{r,k} \in \K$ with $\left\| \sum_{j=-r}^{r} b_{j,k}e_{j} \right\| > 0$ and
			$$
			\frac{1}{N_k\left\| \sum_{j=-r}^{r} b_{j,k}e_{j} \right\|}\sum_{i=1}^{N_k}\left\| \sum_{j=-r}^{r} b_{j,k}w_{j-i}\cdots w_{j-1}e_{j-i} \right\|\ge k.
			$$
		\end{itemize}
	\end{theorem}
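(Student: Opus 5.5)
The plan is to use Lemma \ref{LL} in both directions: the absolutely mean-irregular vector (b) for necessity, and the Mean Li--Yorke Chaos Criterion (c) for sufficiency. This mirrors the proofs of Theorem \ref{T2} and Proposition \ref{prop}, but the norm replaces the metric $d$. Note that in (B) the integer $m$ plays no role, since there is only one norm.

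($\Rightarrow$) By Lemma \ref{LL}, $B_w$ admits an absolutely mean-irregular vector $y=(y_j)_{j\in\Z}$. Pick $j_0$ with $y_{j_0}\neq 0$. Since $(B_w)^k y$ has $(j_0-k)$-th coordinate $y_{j_0}w_{j_0-k}\cdots w_{j_0-1}$, condition (C) gives
$$|y_{j_0}|\,\|w_{j_0-k}\cdots w_{j_0-1}e_{j_0-k}\|\le \|(B_w)^k y\|.$$
Hence $\liminf_n \frac1n\sum_{k=1}^n \|w_{j_0-k}\cdots w_{j_0-1}e_{j_0-k}\|=0$. To move from $j_0$ to $0$, I repeat the index-shifting argument of Lemma \ref{lemex} with the norm in place of $d$. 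This is easier than the metric case, because $\|C u\|=|C|\|u\|$ is exact and the finitely many extra terms contribute $O(1/N_k)$ times a fixed constant. This yields (A).

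For (B), I use $\limsup_N \frac1N\sum_{i\le N}\|(B_w)^i y\|=\infty$ and $\|y\|>0$ to choose increasing $N_k$ with $\frac{1}{N_k\|y\|}\sum_{i=1}^{N_k}\|(B_w)^iy\|>k+\tfrac12$. Then I truncate $y$ to $\sum_{|s|\le r}y_se_s$ with $r$ large. Continuity of $(B_w)^i$ for the finitely many $i\le N_k$, together with convergence of the truncations to $y$ (so their norms tend to $\|y\|>0$), keeps the ratio above $k$. This is exactly the argument in Proposition \ref{prop}.

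($\Leftarrow$) I take $X_0:=\{e_j: j\in\Z\}$. Condition (i) of the MLYCC requires $\liminf_N\frac1N\sum_{k=1}^N\|(B_w)^k e_j\|=0$ for every $j$. By (A) this holds for $j=0$, and the norm version of Lemma \ref{lemex} transfers it to every $j\in\Z$. For condition (ii), set $y_k:=\sum_{j=-r}^r b_{j,k}e_j$. This lies in $\operatorname{span}(X_0)$, and since $(B_w)^i y_k=\sum_j b_{j,k}w_{j-i}\cdots w_{j-1}e_{j-i}$, (B) gives $\frac{1}{N_k}\sum_{i=1}^{N_k}\|(B_w)^i y_k\|\ge k\|y_k\|$. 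The criterion asks for strict inequality, so I apply (B) with $k+1$ in place of $k$ and use $\|y\|>0$ to get $>k\|y_k\|$. Lemma \ref{LL} then gives mean Li--Yorke chaos.

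The main obstacle is the careful norm-version of Lemma \ref{lemex}: the index shift in the case $\ell<j$, and keeping $N_k/n_k\to1$. This is routine, however, since the norm is homogeneous.
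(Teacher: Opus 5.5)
Your proposal is correct and follows essentially the same route as the paper. For necessity you use an absolutely mean-irregular vector from Lemma \ref{LL}, condition (C), the norm version of Lemma \ref{lemex} and truncation via the basis; for sufficiency you use the MLYCC with $X_0=\{e_j : j\in\Z\}$ and $y_k=\sum_{j=-r}^{r} b_{j,k}e_j$, and your shift from $k$ to $k+1$ to get the strict inequality in (ii) correctly handles a detail the paper leaves implicit.
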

	\begin{proof}
		$(\Rightarrow)$ By the Lemma \ref{LL}, $B_w$ admits an absolutely mean irregular vector $x:=(x_n)_{n\in \Z}\in X$. In particular, $\liminf_{n\to \infty}\frac{1}{n}\sum_{k=1}^{n}\left\| (B_w)^k(x) \right\|=0.$ Take $j_0\in\Z$ such that $x_{j_0}\ne 0$. Then, by condition (C), we have 
		$$\liminf_{n\to \infty}\frac{1}{n}\sum_{k=1}^{n}\|x_{j_0}w_{j_0-k}\cdots w_{j_0-1}e_{j_0-k}\|\le\liminf_{n\to \infty}\frac{1}{n}\sum_{k=1}^{n}\left\| (B_w)^k(x) \right\|=0.$$
		By an analogous argument to that of Lemma ~\ref{lemex} (with the norm replacing the metric), we obtain item~(A). Since $\limsup_{N\to\infty}\frac{1}{N}\sum_{j=1}^{N}\|T^{j}x\|=\infty$, then there exists an increasing sequence $(N_k)_{k \in \N}$ of positive integers such that for each $k \in \N$
		$$\frac{1}{N_k\left\| x \right\|}\sum_{i=1}^{N_k}\left\| (B_w)^i (x) \right\|> k +\frac{1}{2}.$$
		Therefore, using analogous arguments to those used in the $(\Rightarrow)$ part of the proof of Theorem~\ref{T2}—namely, the density of \(c_{00}(\mathbb{Z})\) (since \((e_n)_{n\in\mathbb{Z}}\) is a basis) and the continuity of \(B_w\)—we conclude item~(B). 
		
		$(\Leftarrow)$ By item (A) and by an argument analogous to that of Lemma ~\ref{lemex} (with the norm replacing the metric), we have $\liminf_{n\to \infty}\frac{1}{n}\sum_{k=1}^{n}\left\| (B_w)^k(e_i) \right\|=0$, for all $i \in \Z$. Then, taking $X_0:=\left\{ e_n : n\in \Z\right\}$, we obtain item (i) of MLYCC. Moreover, taking $y_k:= \sum_{j=-r}^{r} b_{j,k}e_{j}$ for each $k\in \N$, by item (B), we conclude  item (ii) of MLYCC. Therefore, by Lemma \ref{LL}, $B_w$ is mean Li--Yorke chaotic.
	\end{proof}
	Examples of shifts that are hypercyclic but not mean Li--Yorke chaotic are already known; see, for instance, \cite[Example 23]{BBPW}. In what follows, to illustrate the use of the necessary direction in Theorem \ref{T2}, we will give an example of a weighted backward shift on $s(\Z)$ that is  hypercyclic, but it is not mean Li--Yorke chaotic.
	\begin{example}
		Consider the weighted backward shift $B_w$ on $s(\Z)$ with weights 
		$$(w_n)_{n \in \Z}:= (\cdots,\underset{\text{Block }C_n}{\underbrace{\overset{n\text{ times}}{\overbrace{2,\cdots,2}},\overset{n \text{ times}}{\overbrace{\frac{1}{2},\cdots,\frac{1}{2}}}}},\overset{n^2\text{ times}}{\overbrace{\underset{\text{Block }B_n}{\underbrace{1,\cdots,1}}}},\cdots,\underset{\text{Block }C_1}{\underbrace{2,\frac{1}{2}}},\underset{\text{Block }B_1}{ \underbrace{1}},\underset{j\ge 0}{\underbrace{2,2,2,2, \cdots}}),$$
		Consider the sequence $(n_k)_{k \in \N}$ of positive integers, given by 
		$$n_k:= k+\sum_{j=0}^{k-1}2j+\sum_{j=1}^{k}j^2, \quad k \in \N.$$
		Fix $j \in \Z$. Then, there exists a constant $C_{j}>0$ such that for all $\ell \in \N$
		$$\left\| w_{j-n_k}\cdots w_{j-1}e_{j-n_k}  \right\|_{\ell} \le C_{j}\frac{\left( \left| j-n_k \right|+1 \right)^{\ell}}{2^k}, \quad \text{for all }k \in \N.$$
		Thus, $ w_{\ell-n_k}\cdots w_{\ell-1} e_{\ell - n_k}\to 0$, when $k \to \infty$. On the other hand, there is a constant $C'_{j}>0$ such that for all $\ell \in \N$ 
		$$\left\| \frac{e_{j+n_k}}{ w_{j}\cdots w_{j+n_k -1} } \right\|_{\ell}\le C'_{j}\frac{\left( \left| j+n_k \right|+1 \right)^{\ell}}{2^{n_k}}\quad \text{for all } k \in \N.$$
		Thus, $\frac{e_{\ell+n_k}}{  w_{\ell}\cdots w_{\ell+n_k -1} } \to 0$, when $k\to \infty$. Therefore, $B_w$ is hypercyclic. \\
		Now, we will prove that $B_w$ is not mean Li-Yorke chaotic. Fix $N>3$. Then, there exists $n\in \N$ such that 
		$$\sum_{k=1}^{n+1} k^2 +\sum_{k=1}^{n+1}2k\ge N\ge \sum_{k=1}^{n} k^2 +\sum_{k=1}^{n}2k.$$
		Then, 
		\begin{align*}
			\frac{1}{N}\sum_{k=1}^{N}d(w_{-k}\cdots w_{-1}e_{-k},0)&=\frac{1}{N}\sum_{k=1}^{N}\sum_{j=1}^{\infty}\frac{1}{2^j}\min\left\{ 1,w_{-k}\cdots w_{-1}(k+1)^j \right\} \\ 
			&\ge\frac{\sum_{k=1}^{n} k^2}{\sum_{k=1}^{n+1} k^2 +\sum_{k=1}^{n+1}2k} \ge \lambda,
		\end{align*}
		where $\lambda$ is a suitable positive constant. Thus, the condition (A) of Theorem \ref{T2} is not satisfied. Therefore, $B_w$ is not mean Li--Yorke chaotic.
	\end{example}
	In the following, to illustrate the use of the sufficiency direction in Theorem \ref{ttt1}, we will give an example of a weighted backward shift on $\ell^p(\nu,\Z)$ ($p\in [1, \infty)$) that is mean Li-Yorke chaotic but is not  hypercyclic. We emphasize that examples of operators that are mean Li–Yorke chaotic but not hypercyclic are already known; see, for instance, a remark on page 11 of \cite{BBP}.
	\begin{example}
		Consider the weighted backward shift $B_w$ on $\ell^p(\nu,\Z)$ ($p\in [1, \infty)$) (or $c_0(\nu,\Z)$) with 
		$$\nu=(v_n)_{n\in \Z}:=(\underset{j \le0}{\underbrace{\cdots,1,1,1,1}},1,B_{1},1,1,2,B_{2},2,1, \cdots, 1,2,\cdots,n,B_{n},n,\cdots,2,1,\cdots),$$
		where 
		$$B_{n}:=(\underset{10^n\text{ times}}{\underbrace{n+1,n+1 \cdots, n+1}} ), \quad \text{for all }n\in \N$$
		and weights $(w_n)_{n\in \Z}$ given by $w_n:=\frac{1}{2}$ if $n <0$ and $w_n:= 1$ if $n\ge 0$. It is easy to see that $B_w$ is not  hypercyclic, given that $(e_{n_k})_{k \in \N}$ does not converge to zero for any increasing sequence $(n_k)_{k \in \N}$ of positive integers.  \\ 
		Now, we will prove that $B_w$ is mean Li--Yorke chaotic. Since 
		\begin{equation}\label{01}
			\lim_{N\to \infty}\frac{1}{N}\sum_{k=1}^{N}\left\| w_{-k}\cdots w_{-1}e_{-k} \right\|=\lim_{N\to \infty}\frac{1}{N}\sum_{k=1}^{N}\frac{1}{2^k}=0,
		\end{equation}
		then condition (A) of Theorem \ref{ttt1} is satisfied. Now, for each $k \in \N$ take $N_k:= 2\sum_{\ell=1}^{k}\ell +\sum_{\ell=1}^{k}10^\ell$. Note that $v_{N_k}=1$ for all $k\in \N$. Then 
		\begin{align*}
			\frac{1}{N_k\left\| e_{N_k} \right\|}\sum_{i=1}^{N_k}\left\| (B_w)^i(e_{N_k}) \right\|=\frac{1}{N_k\left| \nu_{N_k} \right|}\sum_{i=1}^{N_k}\left| \nu_{N_k-i} \right|&= \frac{\left( 2\sum_{j=1}^{k}\sum_{i=1}^{j}i \right)+\left( \sum_{j=1}^{k}\sum_{i=1}^{10^j}(j+1) \right)}{2\sum_{\ell=1}^{k}\ell +\sum_{\ell=1}^{k}10^\ell} \\
			& = \frac{ \frac{2k(k+1)(k+2)}{6} + \frac{(9k+8)10^{k+1}-80}{81} }{ k(k+1) + \frac{10^{k+1}-10}{9} } \\
			& > \frac{\frac{k\cdot10^{k+1}}{9}}{\frac{2\cdot10^{k+1}}{9}}=\frac{k}{2}
		\end{align*}
		Thus, condition (B) of Theorem \ref{ttt1} is satisfied. Therefore, $B_w$ is mean Li-Yorke chaotic.
		
	\end{example}
	\begin{remark}
		If the uncountable set \(S\) in Definition \(\ref{ldef}\) can be chosen to be dense, then we say that \(f\) is {\it densely mean Li--Yorke chaotic}. Moreover, we say that a continuous linear operator \(T\) on a Banach space \(X\) satisfies the {\it Dense Mean Li--Yorke Chaos Criterion} (DMLYCC) if it satisfies conditions (i) and (ii) in item (c) of Lemma \(\ref{LL}\), with \(X_0\) being a dense set. Suppose that $X$ is separable, then, by \cite[Theorem 21]{BBP}, we have that \(T\) is densely mean Li--Yorke chaotic if and only if \(T\) satisfies the DMLYCC.
		
		Since in the last example we proved that
		\[
		\lim_{N\to \infty}\frac{1}{N}\sum_{k=1}^{N}\left\| w_{-k}\cdots w_{-1}e_{-k} \right\|=0,
		\]
		by an analogous lemma to Lemma \(\ref{lemex}\) (with the norm in place of the metric, and the limit instead of the limit inferior), we may conclude that
		\[
		\lim_{N\to \infty}\frac{1}{N}\sum_{k=1}^{N}\left\| (B_w)^k(x) \right\|=0,
		\quad \text{for every } x \in c_{00}(\mathbb{Z}).
		\]
		Therefore, we may take \(X_0\) to be a dense set, and hence \(B_w\) is in fact {\it densely mean Li--Yorke chaotic}.
	\end{remark}
	Recall that an operator $T$ on a Banach space $Y$ is said to be
	\emph{absolutely Cesàro bounded} if there exists a constant $C \in (0,\infty)$ such that
	\[
	\sup_{N \in \mathbb{N}} \frac{1}{N} \sum_{n=1}^{N} \|T^n y\|
	\le C\|y\|
	\quad \text{for all } y \in Y.
	\]
	This property is closely related to mean Li--Yorke chaos in the context of Banach
	spaces. In fact, if an operator $T$ is mean Li--Yorke chaotic, then it is not
	absolutely Cesàro bounded; see, for instance,
	\cite{BermBMP, BBP} for further details. Observe that item (B) of Theorem \ref{ttt1} is equivalent to saying that \(B_w\) is not absolutely Ces\`aro bounded. Therefore, by specializing Theorem \ref{ttt1} to the spaces \(\ell^p(\mathbb{Z})\) (\(p \in [1,\infty)\)) or \(c_0(\mathbb{Z})\), we recover the characterization given in \cite{Nilson1}:
	\begin{corollary}\cite[Corollaries 73 and 78]{Nilson1}\label{f3}
		A weighted backward shift $B_w$ on $X:=\ell^p(\Z)$ ($p \in [1, \infty)$) or $X:=c_0(\mathbb{Z})$ with nonzero weights is mean Li--Yorke chaotic if and only if it is not absolutely Cesàro bounded and
		$$\liminf_{N \to \infty} \frac{1}{N} \sum_{n=1}^{N} |w_{-n} \cdots w_{-1}| = 0.$$
	\end{corollary}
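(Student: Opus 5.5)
We will deduce Corollary~\ref{f3} directly from Theorem~\ref{ttt1}, so the plan is to check the hypotheses of that theorem for $X=\ell^p(\Z)$ and $X=c_0(\Z)$ and then rewrite its conditions (A) and (B) in the stated form.

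First, the hypotheses. Both spaces are Banach sequence spaces over $\Z$, since each coordinate functional has norm one. The canonical vectors $(e_n)_{n\in\Z}$ form a basis: for $\ell^p$ with $p<\infty$ the tails of $\sum|x_j|^p$ go to zero, and for $c_0$ the tails of $\sup|x_j|$ go to zero. Condition (C) holds because $\|e_m\|=1$ and $|x_m|\le\|x\|$ for both norms. Continuity of $B_w$ is assumed in the statement; it is equivalent to $\sup_n|w_n|<\infty$.

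Next, condition (A) of Theorem~\ref{ttt1}. Since $\|e_{-k}\|=1$, we have
\[
\|w_{-k}\cdots w_{-1}e_{-k}\|=|w_{-k}\cdots w_{-1}|,
\]
so (A) is literally the displayed condition $\liminf_{N}\frac1N\sum_{n=1}^N|w_{-n}\cdots w_{-1}|=0$.

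The remaining step is to show that condition (B) of Theorem~\ref{ttt1} is equivalent to $B_w$ not being absolutely Cesàro bounded. Write $y_k:=\sum_{j=-r}^r b_{j,k}e_j$.
\begin{itemize}
\item[(i)] By the binomial-free computation $(B_w)^i e_j=w_{j-i}\cdots w_{j-1}e_{j-i}$ and linearity, the sum in (B) equals $\frac{1}{N_k\|y_k\|}\sum_{i=1}^{N_k}\|(B_w)^i y_k\|$. So (B) says that for every $k$ there are a finitely supported $y_k\neq0$ and some $N_k$ with $\frac1{N_k}\sum_{i\le N_k}\|(B_w)^iy_k\|\ge k\|y_k\|$. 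This clearly rules out absolute Cesàro boundedness.
\item[(ii)] Conversely, suppose $B_w$ is not absolutely Cesàro bounded. Then for each $k$ there are $y\neq0$ and $N$ with $\frac1N\sum_{i\le N}\|(B_w)^iy\|>(k+1)\|y\|$. Truncate $y$ to $\sum_{|j|\le r}y_je_j$. Since $B_w^i$ is continuous for the finitely many $i\le N$, and truncations converge to $y$ by the basis property, for $r$ large the ratio still exceeds $k$. This is the same approximation argument as in the $(\Rightarrow)$ part of Theorem~\ref{ttt1}.
\end{itemize}

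The only mildly delicate point is that (B) demands an \emph{increasing} sequence $(N_k)$. We handle it as follows. If the same $N$ works for arbitrarily large $k$, the finite sum $\sum_{i\le N}\|(B_w)^i\|$ would be unbounded, which contradicts continuity. Hence the $N$ obtained for level $k$ must tend to infinity as $k\to\infty$. We can then pass to a subsequence of levels $k_s\ge s$ along which the $N$'s strictly increase; since the inequality at level $k_s$ implies it at level $s$, this produces the required increasing sequence $(N_s)$.

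Combining the equivalence of (A) with the displayed liminf condition and the equivalence of (B) with failure of absolute Cesàro boundedness, Theorem~\ref{ttt1} yields the corollary.
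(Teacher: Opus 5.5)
Your proposal is correct and follows the paper's route: you specialize Theorem~\ref{ttt1} to $\ell^p(\Z)$ and $c_0(\Z)$, where $\|e_{-k}\|=1$ turns (A) into the displayed $\liminf$ condition, and you identify (B) with the failure of absolute Ces\`aro boundedness. The paper only asserts that last equivalence, so your truncation argument and your handling of the increasing sequence $(N_k)$ (using that $N_k\to\infty$ because each finite sum $\sum_{i\le N}\|(B_w)^i\|$ is finite) fill in details the paper leaves implicit rather than constituting a different approach.
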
	
	\begin{remark}
		We note that the unilateral versions of Proposition \ref{prop}, Theorem \ref{ttt1} and Corollary \ref{f3} can be obtained in a straightforward way by using analogous arguments to those in the proofs of the bilateral versions.
	\end{remark}



	\smallskip
	
	{\footnotesize

		\bigskip\noindent
		{\sc Jo\~ao V. A. Pinto}
		
		\smallskip\noindent
		Departamento de Matem\'atica Aplicada, Instituto de Matem\'atica, Universidade Federal do Rio de Janeiro, 
		Caixa Postal 68530, RJ 21941-909, Brazil.\\
		\textit{ e-mail address}: joao.pinto@ufu.br
		
	}
	
\end{document}